
\documentclass[reqno,11pt]{amsart}


\usepackage{amsmath, amsthm, a4, latexsym, amssymb}

\setlength{\topmargin}{0in} \setlength{\headheight}{0.12in} \setlength{\headsep}{.40in}
\setlength{\parindent}{1pc} \setlength{\oddsidemargin}{-0.1in} \setlength{\evensidemargin}{-0.1in}

\marginparwidth 48pt
\marginparsep 10pt

\oddsidemargin-0.5cm
\evensidemargin-.5cm

\headheight 12pt
\headsep 25pt
\footskip 30pt
\textheight  630pt 
\textwidth 170mm
\columnsep 10pt
\columnseprule 0pt
\setlength{\unitlength}{1mm}

\setlength{\parindent}{20pt}
\setlength{\parskip}{2pt}

\def\@rmrk#1#2{\refstepcounter
    {#1}\@ifnextchar[{\@yrmrk{#1}{#2}}{\@xrmrk{#1}{#2}}}

%
 
\makeatletter\@addtoreset{equation}{section}\makeatother

 \sloppy
 \parskip 0.8ex plus0.3ex minus0.2ex
 \parindent0.0em

 \newfont{\bfit}{cmbxti10 scaled 1200}

 \newcommand{\reals}{{\mathbb{R}}}
 \newcommand{\eps}{\varepsilon}

 \newcommand{\R}{\mathbb{R}}
 \newcommand{\N}{\mathbb{N}}
 
 \newcommand{\prob}{\mathbb{P}}

 \newcommand{\me}{\mathbb{E}}

 \newcommand{\1}{{\sf 1}}

 \newcommand{\skric}{{\mathcal C}}

 \newcommand{\skrik}{{\mathcal K}}
 \newcommand{\skril}{{\mathcal L}}
 \newcommand{\skrim}{{\mathcal M}}
 
 \newcommand{\skrip}{{\mathcal P}}

 \newcommand{\skrit}{{\mathcal T}}

 \newcommand{\skrix}{{\mathcal X}}
 \newcommand{\skriy}{{\mathcal Y}}
 \newcommand{\heap}[2]{\genfrac{}{}{0pt}{}{#1}{#2}}
 \newcommand{\sfrac}[2]{\mbox{$\frac{#1}{#2}$}}

\newenvironment{Proof}[1]
{\vskip0.1cm\noindent{\bf #1}}{\vspace{0.15cm}}
\renewcommand{\subsection}{\secdef \subsct\sbsect}
\newcommand{\subsct}[2][default]{\refstepcounter{subsection}
\vspace{0.15cm}
{\flushleft\bf \arabic{section}.\arabic{subsection}~\bf #1  }
\nopagebreak\nopagebreak}
\newcommand{\sbsect}[1]{\vspace{0.1cm}\noindent
{\bf #1}\vspace{0.1cm}}

{\nopagebreak {\hfill\rule{2mm}{2mm}}\\ }

\newtheorem{theorem}{Theorem}[section]
\newtheorem{lemma}[theorem]{Lemma}
\newtheorem{cor}[theorem]{Corollary}

\newtheoremstyle{thm}{1.5ex}{1.5ex}{\itshape\rmfamily}{}
{\bfseries\rmfamily}{}{2ex}{}

\newtheoremstyle{rem}{1.3ex}{1.3ex}{\rmfamily}{}
{\itshape\rmfamily}{}{1.5ex}{}
\theoremstyle{rem}
\newtheorem{remark}{{\slshape\sffamily Remark}}[]

\refstepcounter{subsubsection}

\def\thebibliography#1{\section*{Bibliography}
  \list%
  {\arabic{enumi}.}
    {\settowidth\labelwidth{[#1]}\leftmargin\labelwidth
    \advance\leftmargin\labelsep
    \parsep0pt\itemsep0pt
    \usecounter{enumi}}
    \def\newblock{\hskip .11em plus .33em minus .07em}
    \sloppy                   
    \sfcode`\.=1000\relax}



 \begin{document}
\title[Large deviations for trees
] {\Large Large deviation Results for Critical Multitype
Galton-Watson trees}

\author[Kwabena Doku-Amponsah]{}

\maketitle \centerline{\sc{By Kwabena Doku-Amponsah}}
\centerline{\textit{University of Ghana}}

\thispagestyle{empty}
\vspace{0.2cm}


\vspace{0.5cm}

\centerline{\bf \small Abstract} \begin{quote}{\small In  this  article, we prove a joint large deviation principle in
$n$ for the \emph{empirical pair measure} and\emph{ empirical
offspring measure} of critical multitype Galton-Watson trees
conditioned to have exactly $n$ vertices in the weak topology. From
this result we  extend the large deviation principle for the
empirical pair measures of Markov chains on simply generated
trees  to  cover  offspring  laws  which  are not treated by \cite[Theorem~2.1]{DMS03}. For the case where the offspring law of
the tree is a geometric distribution with parameter  $\sfrac{1}{2}$,
we get an exact rate function. All our rate functions are expressed
in terms of relative entropies. }
\end{quote} \vspace{0.5cm}

\begin{tabular}{lp{13cm}}
{\it Keywords:} & Tree-indexed Markov chain, critical Galton-Watson
tree, joint large deviation
principle, empirical pair measure, empirical transition measure, empirical offspring measure, sub-consistency of  empirical measures, weak shift-invariance.\\
\multicolumn{2}{l}
{{\it MSC 2000:} Primary 60F10. Secondary 60J80, 05C05.}\\
\end{tabular}
\renewcommand{\thefootnote}{1}
\renewcommand{\thefootnote}{}
\footnote{\textit{Address:} Statistics Department, University of
Ghana, Box LG 115, Legon,Ghana.\,
\textit{E-mail:\,kdoku@ug.edu.gh}.}

\section{Introduction~and~Background}\label{intro}

For the  past  decades, conditioned Galton-Watson  trees have
received an increasing attention from researchers, see, e.g.
\cite{Al91a}, \cite{Al91b}, \cite{Al93},\cite{AP98} or \cite{SJ12}
and the references therein.These random trees have proved  to  be  extremely  good  in modelling
 phenomena which occur in natural hierarchy, example
mutations in mitochondrial DNA \cite{OS02}.

Large  deviation studies of critical Galton-Watson trees
conditioned on the total size was first studied by Dembo,
M\"{o}rters and Sheffield ~\cite{DMS03}. In their article, concepts
such  as shift-invariance and specific relative entropy were extended to
Markov fields on random trees. With these concepts, large deviation
principles for empirical measures of a class of random trees
including Galton-Watson trees conditioned to have  exactly
$n$ vertices were proved  in a topology \emph{stronger} than the weak.
The  strong  topology  (which  necessitated the use of  strict moment assumption)
restricted  their  study to only Galton-Watson trees with offspring laws
super-exponentially decay at infinity. i.e. offspring law $p(\cdot)$ with  all its
exponential moments finite.

Their  paper also look at  the  large deviation
principle for the empirical offspring measure of multitype
Galton-Watson trees  but for only offspring laws   with  all 
exponential moments finite, see \cite{DMS03}.


The  aim of  this  article is to carry out a non-trivial  extension
of the large deviation principle for  the tree  indexed Markov chain
of \cite[Theorem~2.2]{DMS03} to cover offspring laws not discussed
by the  paper,  for example, a {\em Markov chain indexed by geometric $ \sfrac 1 2$
offspring  law}.

To  be  specific, we prove a joint large deviation principle for the
empirical pair measure and  empirical offspring measure of multitype
Galton-Watson trees having critical offspring laws with finite
second moments. This includes offspring laws considered by
~\cite{DMS03}.

To deal  with the problem of  exponential tightness  in  the strong
topology  encounted in \cite{DMS03} which necessitated the use  of
strong moment condition, we define the
concept of \emph{consistency}   for empirical measures  of  multitype Galton-Watson trees, see,  example  Doku-Amponsah and M\"{o}rters~\cite{DM10}. With this
concept, we prove the upper bound, using similar strategy as  in
\cite{DMS03}  under only finite second moment condition in the weak
topology.

 Our proof of
the lower bound  unlike the  proof  of their lower bound  uses a
truncation argument for vertices  with  too  many  offspring. To  be
specific, for  a given Multitype Galton-Watson tree we obtain
another Multitype Galton-Watson tree  by restricting the offspring
distribution  to some bounded set $\skrix_k^{*}.$ Taking appropriate
limit as   $k$  goes to infinity we obtain the results as  a limit.

Using the contraction principle, see Dembo \cite{DZ98}, we derive
from our main results large deviation principle for the empirical pair
measure of Markov chain indexed by random trees. This result is
similar to the one in \cite{DMS03}. We remark here that the process
level large deviation principles for the empirical subtree measure
and single-generation empirical measure, see \cite{DMS03}, can be
developed from our main results.

Specifically, we consider random tree models where trees and types
are chosen \emph{simultaneously} according to a \emph{multitype
Galton-Watson tree}. We recall from \cite{DMS03}  the model of
multitype Galton-Watson tree. Let  $\skriy$ be  a  finite  alphabet. Write $\skriy^*=\bigcup_{n=0}^\infty \{n\} \times \skriy^n$
 and endow  it with the discrete topology. Denote by
$\skrit$ the set of all finite rooted planar trees $T$, by $V=V(T)$
the set of all vertices and by $E=E(T)$ the set of all edges
oriented away from the root, which is always denoted by $\sigma$. Write $|V(T)|$ for the number of vertices in the tree $T$. Note that
the offspring of any vertex $v\in T$ may  be  represented by an element
of $\skriy^*.$ Let the element $(0,\emptyset)\in\skriy^*$ denote  lack of offspring. Associated with  every typed tree $Y$ and each vertex $v$ we denote by
$(N(v),Y_1(v),\ldots,Y_{N(v)}(v))\in \skriy^*$ the
number and types of the children of $v$, ordered from left to right.

Let $\mu:\skriy\to[0,\,1]$  be the initial
law, and  $\skrik$ be the offspring transition kernel. Define  a
tree-indexed process $Y$, see Pemantle~\cite{Pe95}, as follows:
\begin{itemize}
\item Assign the root $\sigma$  a random type $Y(\sigma)$
chosen according to the law $\mu$ on $\skriy$.
\item Give independently of everything else, each vertex with type $a\in \skriy$ the offspring number
and types, by
offspring law $\skrik\{ \, \cdot\, \,|\, a\}$ on $\skriy^*$. We write
$$\skrik\{ \, \cdot\, \,|\, a\}
=\skrik\{ (N,Y_1,\ldots, Y_N)\in \cdot \,\, | \, a\},$$ i.e. we have a
random number $N$ of offspring particles with types $Y_1,\dots,Y_N$.
\end{itemize}
Let $c=(n,a_1,\ldots,a_n)\in \skriy^*$ and $a\in\skriy.$  Define  the
\emph{multiplicity} of the symbol~$a$ in $c$  by
$\displaystyle \ell(a,c)=\sum_{i=1}^n \1_{\{a_i=a\}}$ and  the matrix
$M$ with  nonnegative entries by

$$\displaystyle M(a,b)=\sum_{c\in \skriy^*} \skrik\{ c \, | \, b\}
\ell(a,c), \mbox{ for } a,b\in\skriy.$$  i.e.~$M(a,b)$ are the expected
number of offspring of type~$a$ of a vertex of type~$b$.  We also
recall from \cite{DMS03} the weak form of irreducibility concept.
 With $M_*(a,b) =
\sum_{k=1}^\infty M^k(a,b) \in [0,\infty]$ we say that the matrix
$M$ is \emph{weakly irreducible} if $\skriy$ can be partitioned into
a non empty set $\skriy_r$ of \emph{recurrent states} and a disjoint
set $\skriy_t$ of \emph{transient states} such that
\begin{itemize}
\item $M_*(a,b)>0$ whenever $b \in \skriy_r$, while
\item $M_*(a,b)=0$ whenever $b \in \skriy_t$ and either $a=b$ or $a \in \skriy_r$.
\end{itemize}
For example, any \emph{irreducible} matrix $M$ has $M_*$ strictly
positive, hence is also weakly irreducible with $\skriy_r=\skriy$.
The multitype Galton-Watson tree is called weakly irreducible (or
irreducible) if the matrix $M$ is weakly irreducible (or
irreducible, respectively) and the number $\sum_{a \in \skriy_t}
\ell(a,c)$ of transient offspring is uniformly bounded under $\skrik$.

Recall that, by the Perron-Frobenius theorem, see e.g. \cite[Theorem
3.1.1]{DZ98}, the largest eigenvalue of an irreducible matrix is
real and positive. Obviously, the same applies to weakly irreducible
matrices.  The multitype Galton-Watson tree is called
\emph{critical} if this eigenvalue is $1$ for the matrix $M$.\\

The remaining part of the article is organized in the following
manner: The complete statement of our results is given in Section
\ref{statement}, we  begin with joint LDP for empirical pair
measures and empirical offspring measures of multitype Galton-Watson
trees, followed by a corollary of the LDP for the empirical offspring
measure of multitype Galton-Watson trees in
subsection~\ref{offspring}. In subsection~\ref{pair}, we state the
LDP for empirical pair measures of Markov chains indexed by a tree.
The proofs of our main results are then given in Section
\ref{proofs}. All corollaries and Theorem~\ref{main} are proved in
Section~\ref{cormain}.\\

The  main  difference between  this  paper and \cite{DMS03} lies in
the  the  topologies  used.  The  rate function  in
\cite[Theorem~2.2]{DMS03} is not  continuous  in  the weak topology
but it is  continuous in a topology  stronger enough to make the
functionals $\pi\mapsto \int f(b,c)\, \pi(db\, ,dc)$ continuous, for
$f:\skriy\times\skriy^*\to\R$ either bounded, or
$f(b,c)=\ell(a,c)\1_{b_0}(b)$ for some $a,b_0 \in\skriy$. In  this
strong  topology,   sequence  of  empirical  offspring measures of
Multi-type   Galton-Watson  tree is exponentially  tight  only if
the  offspring distribution   of   the tree has all its exponential
moments  finite. We  note here that, our rate functions are
continuous  in  the  weak  topology,  and  in the weak topology only
some  moments  are  necessary for establishing exponential
tightness.

\section{Statement of the results} \label{statement}

\subsection{ Joint large deviation principle for empirical pair measure and
empirical offspring measure of critical multitype Galton-Watson
trees}\label{offspring}

We  assume  throughout the  remaining part of  this  paper  that
$T$  is  a  finite  tree.

For every sample chain $Y$, we associate the \emph{empirical
children measure} $\skrim_Y$ on $\skriy\times\skriy^*$, by
\begin{equation}\label{Def.MX}
\skrim_Y(a,c)=\frac 1{|V(T)|} \sum_{v\in V} \delta_{\big (X(v),\,(N(v), Y_1(v),Y_2(v),\,...,Y_N(v))\big )} (a,c),\,\, \mbox{$a\in \skriy$,  $c\in\skriy^*$ }
\end{equation}
and the empirical pair measure on  $\skriy\times\skriy$,  by
\begin{equation}\label{Def.LX} \tilde{\skril}_Y(a,b)= \frac{1}{|V(T)|} \sum_{\sigma\in
E} \delta_{(Y(\eta_1), Y(\eta_2))}(a,b), \,\, \mbox{ for } a,b\in\skriy,
\end{equation}
where $\eta_1, \eta_2$ are the beginning and end vertex of the edge $\eta\in
E$ (so $\eta_1$ is closer to $\eta$ than $\eta_2$).
 We note that $$\tilde{\skril}_Y(a,b)=\sum_{c\in\skriy^*}\ell(b,c)\skrim_Y(a,c).$$
By definition, we notice that $\skrim_Y$  is a probability vector and
that total mass $\|\tilde{\skril}_Y\|$ of $\tilde{\skril}_Y$ is
$\sfrac{|V(T)|-1}{|V(T)|}\le 1.$

Our main result is an LDP for
$(\tilde{\skril}_Y,\,\skrim_Y)$ if $Y$ is a critical multitype Galton-Watson
tree.

We denote by $\skrip(\skriy\times\skriy^*)$ the space of probability
measures $\pi$ on $\skriy\times\skriy^*$ with $\int n \, \pi(da \, ,
dc)<\infty$, using the convention $c=(n,a_1,\ldots,a_n)$.   Denote by
$\tilde{\skrip}(\skriy\times\skriy)$ the space of finite measures $\omega$  on
$\skriy\times\skriy$  with  $\|\omega\|\le 1$ and  endow the space
$\tilde{\skrip}(\skriy\times\skriy)\times\skrip(\skriy\times\skriy^*)$
with the weak topology.  We also  endow
$\Big\{(\omega,\pi)\in\tilde{\skrip}(\skriy\times\skriy)\times\skrip(\skriy\times\skriy^*):
\omega_2=\pi_1\Big\}$  with  the  weak  topology.

We call $(\omega,\,\pi)\in\tilde{\skrip}(\skriy\times\skriy)\times\skrip(\skriy\times\skriy^*)$
\emph{sub-consistent}  with  respect to the  weak topology if
\begin{equation}\label{consistent}
\omega(a,b)\ge \sum_{c\in \skriy^*} \ell(b,c) \pi(a,c),\, \mbox{ for
all $a\in\skriy$ and  $b\in\skriy$.}
\end{equation}


 It is called \emph{consistent} if equality hold in
\eqref{consistent}.Observe that, if $(\omega,\,\pi)$ is empirical
pair measure and empirical offspring measure of a multitype
Galton-Watson tree then  \eqref{consistent} is
$$\sfrac{1}{n}\times\sharp\big\{\mbox{edges with beginning vertex
of type $a$ and end vertex of type $b$}\big\}.$$

Note  that by  definition any consistent $(\omega,\,\pi)$  is
sub-consistent but not all sub-consistent elements of
$\tilde{\skrip}(\skriy\times\skriy)\times\skrip(\skriy\times\skriy^*)$
are  consistent.

We call an offspring distribution $\skrik$ \emph{bounded} if
for some $\,k<\infty,$ we have \,$$\skrik\{N>k\,|\,a\}=0, \,
\mbox{ for all $\,a\in\skriy\,$}.$$ Otherwise we call it
\emph{unbounded}.  We  say the  offspring law $\skrik$  has  a
finite second  moment if
$$\sum_{c\in\skriy^{*}}n(c)^2\,\skrik\{c\,|\,a\}<\infty,\mbox{ for  all $a\in\skriy$}. $$

 To formulate our first LDP, denote by $\pi_1$ the
$\skriy$-marginal of  probability measure $\pi$ on
$\skriy\times\skriy^*$,  by  $\omega_2$  the second marginal of
finite measure $\omega$ on $\skriy\times\skriy$ and write
$$\displaystyle \pi_1\otimes\skrik(a,c):=\pi_1(a)\skrik\{c\,|\,a\}.$$
Further, recall  that the  relative  entropy  of  the  probability
measure $\pi$  with respect $\hat{\pi}$ is given by
$$\displaystyle H(\pi \,\|\,
\hat{\pi})=\sum_{(a,c)\in\skriy\times\skriy^{*}}\pi(a,c)\log\Big[
\sfrac{\pi(a,c)}{\hat{\pi}(a,c)}\Big].$$

\begin{theorem}\label{general}
Suppose that $Y$ is a weakly irreducible, critical multitype
Galton-Watson tree with  offspring law  $\skrik$ whose second moment is
finite, conditioned to have exactly $n$ vertices. Then, for
$n\to\infty$, the  pair $(\tilde{\skril}_Y,\,\skrim_Y)$ satisfies an LDP in $\tilde{\skrip}(\skriy\times\skriy)\times\skrip(\skriy\times\skriy^*)$ with speed $n$ and the
convex, good rate function
\begin{equation}\label{equ-rate12}
J(\omega,\,\pi)=\left\{ \begin{array}{ll} H(\pi \,\|\,
\pi_1\otimes\skrik) &
\mbox{ if $(\omega,\,\pi)$ is sub-consistent and  $\omega_2=\pi_1$ }\\
\infty & \mbox{ otherwise.}
\end{array} \right.\end{equation}
\end{theorem}

\begin{remark}
Note that,  the  functional  relationship
$\tilde{\skril}_Y(a,b)=\sum_{c\in \skriy^*} \ell(b,c) \skrim_Y(a,c)$  may  break
down in  the  limit,  because  $\pi\rightarrow\sum_{c\in \skriy^*}
\ell(\cdot,c) \pi(\cdot,c)$  is  discontinuous  in  the  weak topology.
The possibility  of  this  effect  is  responsible  for the  weak
form of  the  condition  $(\omega,\,\pi)$  sub-consistent in  the
rate function  \eqref{equ-rate12}.
\end{remark}

\begin{remark}
  Observe here  that  the  erratum  on \cite[Theorem~2.1]{DM10}  doses  not  apply  to  Theorem~\ref{general}   above.  In  fact,  \cite[Theorem~2.1]{DM10}  was  proved  by  conditioning  on  the  set  of  consistent  measures  and  therefore,  it  is  required  that the rate  function in \cite[Theorem~2.1]{DM10} is finite on   only consistent measures, see \cite{DM10} and  Erratum. However, the  proof of  Theorem~\ref{general} given in  Section~\ref{proofs}  is not  by  conditioning  on consistent  measures   and  so,  the rate  function \eqref{equ-rate12}  is  finite not only on  consistent measure but  all  sub-consistent measures.
\end{remark}

From  Theorem~\ref{general} and  the  contraction  principle,  see
\cite[Theorem~4.2.1]{DZ98},   we  obtain  a  large  deviation
principle  for  the  empirical  offspring measure $\skrim_Y$ in  the weak
topology.  To  state  this LDP,  we  call
$\pi\in\skrip(\skriy\times\skriy)$ \emph{weak shift-invariant} (with
respect to the  weak topology) if

\begin{equation}\label{weakshift}
\pi_1(b)\ge\sum_{(a,c)\in\skriy\times\skriy^*} \ell(b,c) \pi(a,c),\,
\mbox{ for all  $b\in\skriy$.}
\end{equation}

It is  called  \emph{ shift-invariant } if equality hold in
\eqref{weakshift}. Note, that  if $\pi$ is empirical offspring
measure of a multitype Galton-Watson tree then \eqref{weakshift} is
$$\sfrac{1}{n}\times\sharp\big\{\mbox{ vertices of type $b$}\big\},$$
and  so, \eqref{weakshift}  above  coincide with  the definition  of
shift-invariant  by   \cite[(2.9)]{DMS03}.

\begin{remark}
If $\skrip(\skriy\times\skriy)$  is  equipped with  the  stronger
topology  of  \cite{DMS03} then  every  weak shift-invariant measure
in $\skrip(\skriy\times\skriy)$   is  shift-invariant.  Otherwise,
if  $\pi\in \skrip(\skriy\times\skriy)$ is weak shift-invariant  and
there  exists  $b_0\in\skriy$  such  that  we have
$\pi_1(b_0)>\sum_{(a,c)\in\skriy\times\skriy^*} \ell(b_0,c) \pi(a,c),$
then we  can  find consistent  $(\omega_n,\pi_n),$ see \cite[Lemma~4.6]{DM10},  with
$(\pi_n)_1=(\omega_n)_2,$ $\pi_n$ converging to $\pi$ which will
then  lead to the contradiction
$$\begin{aligned}\pi_1(b_0)>\sum_{(a,c)\in\skriy\times\skriy^*} \ell(b_0,c)
\pi(a,c)&=\sum_{(a,c)\in\skriy\times\skriy^*}
\ell(b_0,c)\lim_{n\to\infty} \pi_n(a,c)=\lim_{n\to\infty}
\sum_{a\in\skriy}\sum_{c\in\skriy^*}
\ell(b_0,c)\pi_n(a,c)\\
&=\lim_{n\to\infty}
\sum_{a\in\skriy}\omega_n(a,b_0)=\lim_{n\to\infty}(\omega_n)_2(b_0)=\lim_{n\to\infty}(\pi_n)_1(b_0)=\pi_1(b_0).
\end{aligned}$$
\end{remark}

\begin{cor}\label{general-cor}
Let $Y$ be a weakly irreducible, critical multitype
Galton-Watson tree with an  offspring law  $\skrik$ whose second moment
is finite, conditioned to have exactly $n$ vertices. Then, for
$n\to\infty$, the empirical offspring measure~$\skrim_Y$ satisfies an LDP in $\skrip(\skriy\times\skriy^*)$ with
speed $n$ and the convex, good rate function
\begin{equation}\label{equ-ratecor}
\Phi(\pi)=\left\{ \begin{array}{ll} H(\pi \,\|\, \pi_1\otimes\skrik) & \,
\mbox{ if \,$\pi$ is  weak shift-invariant,}\\
\infty & \mbox{ otherwise.}
\end{array} \right.\end{equation}
\end{cor}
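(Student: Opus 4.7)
The plan is to obtain Corollary \ref{general-cor} from Theorem \ref{general} by the contraction principle (see \cite{DZ98}). The projection
$$\pi_2: \tilde{\skrim}(\skrix\times\skrix)\times\skrim(\skrix\times\skrix^*) \to \skrim(\skrix\times\skrix^*), \qquad (\varpi,\nu)\mapsto\nu,$$
is continuous in the product weak topology, and $M_X=\pi_2(\tilde{L}_X,M_X)$. The contraction principle therefore yields an LDP for $M_X$ at speed $n$ with good rate function
$$K(\nu)=\inf_{\varpi\in\tilde{\skrim}(\skrix\times\skrix)} J(\varpi,\nu).$$

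The next step is to evaluate this infimum. Since $H(\nu\|\nu_1\otimes\Q)$ does not depend on $\varpi$, the value $J(\varpi,\nu)$ is either $H(\nu\|\nu_1\otimes\Q)$ (when $(\varpi,\nu)$ is sub-consistent and $\varpi_2=\nu_1$) or $+\infty$. Hence $K(\nu)=H(\nu\|\nu_1\otimes\Q)$ whenever the set
$$\skric(\nu):=\bigl\{\varpi\in\tilde{\skrim}(\skrix\times\skrix):\, \varpi(a,b)\ge\sum_{c\in\skrix^*} m(b,c)\nu(a,c)\ \forall\, a,b\in\skrix,\ \varpi_2=\nu_1\bigr\}$$
is non-empty, while $K(\nu)=+\infty$ otherwise.

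It remains to characterize non-emptiness of $\skric(\nu)$. Summing the sub-consistency inequality over $a\in\skrix$ and using $\varpi_2=\nu_1$ gives the necessary condition
$$\nu_1(b)=\sum_{a\in\skrix}\varpi(a,b)\ge\sum_{a\in\skrix}\sum_{c\in\skrix^*} m(b,c)\nu(a,c)=\langle m(\cdot,c),\nu(a,c)\rangle(b).$$
Conversely, if this inequality holds for every $b\in\skrix$, the explicit candidate
$$\varpi(a,b):=\sum_{c\in\skrix^*} m(b,c)\nu(a,c)+\frac{1}{|\skrix|}\bigl(\nu_1(b)-\langle m(\cdot,c),\nu(a,c)\rangle(b)\bigr)$$
is non-negative, sub-consistent, and satisfies $\varpi_2=\nu_1$, so $\skric(\nu)\ne\emptyset$ is equivalent to $\langle m(\cdot,c),\nu(a,c)\rangle\le\nu_1$. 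This identifies $K$ with the formula in \eqref{equ-ratecor}, and convexity of $K$ follows from that of $J$ by the standard argument: for $\varpi_i\in\skric(\nu_i)$ the convex combination $t\varpi_1+(1-t)\varpi_2$ lies in $\skric(t\nu_1+(1-t)\nu_2)$ because the defining constraints are linear in $\varpi$ and $\nu$.

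Since the whole proof reduces to a contraction once Theorem \ref{general} is available, there is no genuine obstacle; the only point requiring a moment's care is the explicit construction above, which shows that the marginal inequality is not merely necessary but also sufficient for $\skric(\nu)$ to be non-empty.
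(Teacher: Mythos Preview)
Your proof is correct and follows essentially the same route as the paper: contraction from Theorem~\ref{general} via the projection $(\varpi,\nu)\mapsto\nu$, followed by identification of the resulting infimum with \eqref{equ-ratecor}. The only minor difference is that you exhibit an explicit $\varpi\in\skric(\nu)$ to prove sufficiency of the marginal inequality, whereas the paper instead observes that $M_X$ always lies in the closed set $\skrim_2=\{\nu:\langle m(\cdot,c),\nu(a,c)\rangle\le\nu_1\}$ and invokes \cite[Lemma~4.1.5]{DZ98} to force the rate function to be $+\infty$ outside $\skrim_2$.
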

Here, we remark that \emph{finite second moment} assumption in
Theorem~\ref{general} and Corollary~\ref{general-cor} is necessary
for us to establish the sub-exponential decay of the probability of
the event $\{|V(T)|=n\}$ on the set
$$\big\{n\in\N:\,\prob\{|V(T)|=n\}>0\big\}.$$ See
\cite[Lemma~3.1]{DMS03}. We write $\skriy_k^*=\bigcup_{n=0}^k \{n\}
\times \skriy^n$ and  notice  that $\skriy_k^*\subset\skriy^*,$  for
all $k\in\N.$

And  by  $\skrik_k\big\{
 \cdot\,|\,a\big\}$   we  denote  an  offspring transition  kernel  with
 support $\skriy_k^*.$ Thus,  we  have   $$\skrik_k\big\{
 \skriy_k^*\,|\,a\big\}=\sum_{c\in\skriy_k^*}\skrik_k\{c\,|a\,\}=1,\, \mbox{ for all  $a\in\skriy$}.$$

 The next large  deviation principle is the main ingredient in the
proof of the lower bound of Theorem~\ref{general}.

\begin{theorem}\label{general2}
Let $Y$ be  a weakly irreducible, critical multitype
Galton-Watson tree with an offspring law  $\skrik_k$,conditioned to have
exactly $n$ vertices. Then, for $n\to\infty$, $(\tilde{\skril}_Y,\,\skrim_Y)$
satisfies a large deviation principle in
$\tilde{\skrip}(\skriy\times\skriy)\times\skrip(\skriy\times\skriy_k^*)$ with speed $n$ and the convex, good rate function

\begin{equation}\label{equ-rate2}
J_k(\omega,\,\pi)=\left\{ \begin{array}{ll} H(\pi \,\|\,
\pi_1\otimes\skrik_k) &
\mbox{ if $(\omega,\,\pi)$ is consistent and $\omega_2=\pi_1$ }\\
\infty & \mbox{ otherwise.}
\end{array} \right.\end{equation}

\end{theorem}
\begin{remark}

In  the  space
$\tilde{\skrip}(\skriy\times\skriy)\times\skrip(\skriy\times\skriy_k^*)$
every  sub-consistent  pair of  measures  is consistent. Otherwise
if $(\omega,\,\pi)$  sub-consistent  and  there  exits  some
$a_0,b_0\in\skriy$  such  that $\omega(a_0,b_0)>\sum_{c\in \skriy^*}
\ell(b_0,c) \pi(a_0,c),$  then  we can  construct a consistent
$(\omega_n,\,\pi_n),$  see \cite[Lemma~4.6]{DM10}, converging to
$(\omega,\,\pi)$ which will lead  to the  contradiction
$$\begin{aligned}\omega(a_0,b_0)>\sum_{c\in \skriy_k^*} \ell(b_0,c)\pi(a_0,c)=\sum_{c\in \skriy_k^*} \ell(b_0,c)
\lim_{n\to\infty}\pi_n(a_0,c)&=\lim_{n\to\infty} \sum_{c\in
\skriy_k^*} \ell(b_0,c)
\pi_n(a_0,c)\\
&=\lim_{n\to\infty}\omega_n(a_0,b_0)=\omega(a_0,b_0).
\end{aligned}$$

\end{remark}

\begin{remark}
Theorem~\ref{general}  and Theorem~\ref{general2} give  the same
large  deviation  principle  with  good rate  function
$J_k(\omega,\,\pi)=J(\omega,\,\pi)$ when  $\skrik=\skrik_k$  for  some  $k.$

\end{remark}

\subsection{LDP for empirical pair measure of Markov
chains indexed by trees}\label{pair}

In this subsection, we look at
the situation where the tree is generated independently of the
types.

Let $T$ be any finite tree, $\mu$ a probability measure on a finite alphabet $\skriy$ and
$K$ a Markovian transition kernel. A \emph{ Markov chain indexed  by tree} $Y:V \to
\skriy$ may  be  obtained as follows: Choose $Y(\sigma)$ according to $\mu$ and choose
$Y(v)$, for each vertex $v\not=\sigma$, independently of everything else, according to  the transition kernel
given the value of its parent. If
the tree is  randomly chosen, we shall look at  $Y=\{Y(v)\, : \,
v\in V(T)\}$ under the \emph{joint law} of tree and chain. It is
sometimes convenient to take $Y$ as a \emph{typed tree} and  consider  $Y(v)$ as the
\emph{type} of the vertex $v$.\\

We consider the class of \emph{simply generated trees}, see
\cite{MM78} or \cite{Al91a},  obtained by conditioning a critical
Galton-Watson on its total number of vertices. To be specific, we
look at the class of Galton-Watson trees, where the number of
children $N(v)$ of each $v \in V(T)$ is chosen independently  according
to the same law $p(\,\cdot\,)=\prob\{N(v)=\,\cdot\,\}$ for all $v
\in V(T)$, while $0<p(0)<1$. We assume that $p$ is critical.  That is,
the mean offspring number $\sum_{n=0}^\infty n p(n)$ is
one, but this assumption may  be  relaxed for  some  noncritical
cases. 

We allow offspring laws $p$ with unbounded support, but we relax the
assumption that all exponential moments of  $p$  are finite. In  fact, we replace  the  stronger condition   $n^{-1} \log
p(n) \to -\infty$ of \cite[Theorem~2.1]{DMS03}  with  a weaker
condition $\sum_{n=0}^{\infty}n^{2} p(n)<\infty,$ and obtain an LDP  for  the  empirical pair measure of tree
indexed Markov  chains.  Assume  hereafter that the statement conditioned on the
event $\{|V(T)|=n\}$ are made only for those values of $n$ where the
event $\{|V(T)|=n\}$ has positive probability.

 For each typed tree $Y,$  we recall from \cite{DMS03}, the definition
 of the
\emph{empirical pair (probability) measure}~$\skril_Y$ on
$\skriy\times\skriy$ as
\begin{equation}\label{Def.LX}
\skril_Y(a,b)= \frac{1}{|E|} \sum_{\eta\in E} \delta_{(Y(\eta_1),
Y(\eta_2))}(a,b), \, \mbox{ for } a,b\in\skriy,
\end{equation}
where $\eta_1, \eta_2$ are the beginning and end vertex of the edge $\eta\in
E$ (so $\eta_1$ is closer to $\sigma$ than $\eta_2$). Notice,
$\skril_Y=\sfrac{n}{n-1}\tilde{\skril}_Y$ on the set $\{|V(T)|=n\}$ and hence
the LDP for $\tilde{\skril}_Y$ implies $\skril_Y$ by exponential equivalent
Theorem, see \cite[Theorem~4.2.13]{DZ98}.  Note,  for all
$a\in\skriy,$ the  \emph{empirical transition  measure}
$$\sfrac{\skril_Y(a,\,\cdot)}{\sum_{b\in\skriy}\skril_Y(a,b)}$$
is a statistics for  the Markovian transition kernel
$K\{\cdot\,|\,a).$ For given  empirical pair  measure  $\mu,$ we
write
$$\rho_1(a):=\sum_{b\in\skriy}\rho(a,b)\,\,
, \,\,\rho_2(a):=\sum_{a\in\skriy}\rho(a,b)\,\, \mbox{and}\,\,
\rho(\cdot\,|\,a):=\sfrac{\rho(a,\,\cdot)}{\rho_1(a)},\,
$$
 whenever $\rho_1(a)>0$.  Recall,  for  all  $a\in\skriy,$   the relative entropy  of an
empirical transition  kernel $\rho(\cdot\,|\,a)$ with respect  to a
Markovian transition kernel $K\{\cdot\,|\,a\}$ as

$$H\big(\rho(\cdot\,|\,a) \, \| \,
K\{\cdot\,|\,a\}\big)=\sum_{a\in\skriy}\rho(b\,|\,a)\log \Big
[\frac{\rho(b\,|\,a)}{K\{b\,|\,a\}}\Big].$$  See, e.g.~
\cite[Theorem~3.1.13]{DZ98}.  Our first result in this subsection,
the  LDP for $\skril_Y,$ is an extension of \cite[Theorem~2.1]{DMS03}.


\begin{theorem}\label{main}
Let $T$ be a Galton-Watson tree, with offspring law
$p(\cdot)$ such that $0<p(0)<1-p(1)$, $\sum_{n} n p(n) = 1$
and $\sum_{n}n^{2} p(n)<\infty.$ Suppose that  $Y$ is a Markov chain
indexed by $T$ with arbitrary initial distribution and an
irreducible Markovian transition kernel $K$. Then, for $n\to\infty$,
the empirical pair measure $\skril_Y$, conditioned on $\{|V(T)|=n\}$
satisfies an LDP in $\skrip(\skriy \times
\skriy)$ with speed $n$ and the convex, good rate function

\begin{align}\phi(\rho)=\left\{
\begin{array}{ll} \displaystyle \sum_{a \in \skriy} \rho_1(a) \Big[ H\big(\rho(\cdot\,|\,a) \, \| \, K\{\cdot\,|\,a\}\big)\Big] +
 \, \displaystyle \sum_{a \in \skriy} \rho_2(a)\Big[\phi_p \big(\sfrac{\rho_1(a)}{\rho_2(a)}\big)\Big]
 & \mbox{ if $\rho_1\ll \rho_2$,} \\
\infty & \mbox{ otherwise,}
\end{array} \right.
\label{Idef}
\end{align}
where
\begin{equation}\label{Ip-def}
\phi_p (x) = \sup_{\lambda \in \reals} \, \Big\{ \lambda x -
 \log \Big[ \sum_{n=0}^\infty  p(n) e^{\lambda n} \Big] \Big\} \,.
\end{equation}

\end{theorem}

 $\phi(\rho)$  can  be  interpreted  as   the  cost  of  obtaining the
 empirical pair measure  $\rho,$  this  cost consists of  two sub-costs:
\begin{itemize}
\item[(i)]  $\displaystyle \sum_{a \in \skriy} \rho_1(a) \Big[ H\big(\rho(\cdot\,|\,a) \, \| \,
Q\{\cdot\,|\,a\}\big)\Big]$ represents  the  expected cost  of
obtaining the  empirical  transition  kernel  $\rho(\cdot\,|\,a),$
this   cost  is non-negative and  vanishes iff
$\rho(\cdot\,|\,a)=K\{\cdot\,|\,a\}.$
\item[(ii)] $\displaystyle \sum_{a \in \skriy} \rho_2(a)\Big[\phi_p
\big(\sfrac{\rho_1(a)}{\rho_2(a)}\big)\Big]$  represents  the expected
cost of obtaining an  untypical  indexed  tree  for the  Markov
Chain with empirical  transition  measure  $\rho(\cdot\,|\,a),$ this
cost is non-negative  and  vanishes  iff  $\rho_1=\rho_2.$ i.e. if
$\rho$ is shift-invariant.  See,  \cite[Theorem~3.1.13]{DZ98}.
\end{itemize}
Hence,  $\phi(\rho)$ is  non-negative  and  vanishes  iff
$\rho(\cdot\,|\,a)=K\{\cdot\,|\,a\}$  and  $\rho_1=\rho_2.$

Note that  there  is no  qualitative  difference  between
Theorem~\ref{main} and  \cite[Theorem~2.1]{DMS03}\emph{except}  the
topologies.  Moreover, our result allows  for  the  study  of   more
general offspring distributions, namely offspring laws with (only)
finite second moments. For  instance, our result allows the
formulation of an LDP for the empirical pair measure of critical
Galton-Watson trees with geometric distribution with parameter
$\sfrac 12$  as follows:
\begin{cor}\label{geometric}
Suppose that $T$ is a Galton-Watson tree, with offspring law
$p(n)=2^{-(n+1)},$ $n=0,1,\ldots,$.  Let $Y$ be a Markov
chain indexed by $T$ with arbitrary initial distribution and an
irreducible Markovian transition kernel $K$. Then, for $n\to\infty$,
the empirical pair measure $\skril_Y$, conditioned on $\{|V(T)|=n\}$
satisfies a large deviation principle in 
$\skrip(\skriy \times \skriy)$ with speed $n$ and the convex, good
rate function

\begin{align}
\phi(\rho)=\left\{ \begin{array}{ll} H(\rho \, \| \,\rho_1\otimes K) +
H(\rho_1\,\|(\rho_1+\rho_2)/2)+H(\rho_2\,\|(\rho_1+\rho_2)/2)
 & \mbox{ if $\rho_1\ll \rho_2$,} \\
\infty & \mbox{ otherwise.}
\end{array} \right.
\label{Idef}
\end{align}

\end{cor}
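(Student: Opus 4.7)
The plan is to derive Corollary~\ref{geometric} as a direct consequence of Theorem~\ref{main} by computing the Legendre transform $I_p$ explicitly for the geometric offspring law and rewriting the resulting expression as a sum of two relative entropies.

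First I would verify that the hypotheses of Theorem~\ref{main} are satisfied: with $p(\ell)=2^{-(\ell+1)}$ we have $p(0)=1/2$ and $p(1)=1/4$, so $0<p(0)<1-p(1)=3/4$; the mean $\sum_{\ell}\ell\,2^{-(\ell+1)}=1$ (critical); and the second moment $\sum_{\ell}\ell^{2}2^{-(\ell+1)}$ is finite. Hence Theorem~\ref{main} applies and $L_X$ conditioned on $\{|T|=n\}$ obeys the stated LDP with rate $I(\mu)=H(\mu\,\|\,\mu_1\otimes Q)+\sum_{a}\mu_2(a)\,I_p(\mu_1(a)/\mu_2(a))$ when $\mu_1\ll\mu_2$.

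Next I would compute $I_p$. For $\lambda<\log 2$, the moment generating function is
\begin{equation*}
\sum_{n=0}^{\infty}p(n)e^{\lambda n}=\tfrac{1}{2}\sum_{n=0}^{\infty}(e^{\lambda}/2)^{n}=\frac{1}{2-e^{\lambda}},
\end{equation*}
so $\log\sum_{n}p(n)e^{\lambda n}=-\log(2-e^{\lambda})$ and $I_p(x)=\sup_{\lambda<\log 2}\{\lambda x+\log(2-e^{\lambda})\}$. Differentiating in $\lambda$ gives the stationarity condition $x(2-e^{\lambda})=e^{\lambda}$, whence $e^{\lambda_{*}}=\frac{2x}{1+x}$ and $2-e^{\lambda_{*}}=\frac{2}{1+x}$. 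Substituting back yields
\begin{equation*}
I_p(x)=x\log\frac{2x}{1+x}+\log\frac{2}{1+x}.
\end{equation*}

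Finally I would substitute this into the extra term of the rate. Writing $x=\mu_1(a)/\mu_2(a)$, a direct algebraic manipulation gives
\begin{equation*}
\mu_2(a)\,I_p\!\left(\tfrac{\mu_1(a)}{\mu_2(a)}\right)=\mu_1(a)\log\frac{2\mu_1(a)}{\mu_1(a)+\mu_2(a)}+\mu_2(a)\log\frac{2\mu_2(a)}{\mu_1(a)+\mu_2(a)}.
\end{equation*}
Summing over $a\in\skrix$ identifies the two pieces as $H(\mu_1\,\|\,(\mu_1+\mu_2)/2)$ and $H(\mu_2\,\|\,(\mu_1+\mu_2)/2)$ respectively, which is exactly the expression in \eqref{Idef}. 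The only delicate point is handling the atoms where $\mu_2(a)=0$: there $\mu_1\ll\mu_2$ forces $\mu_1(a)=0$, so all three logarithmic terms vanish under the usual $0\log 0=0$ convention, and the identification is unaffected. The proof is essentially a change-of-variables calculation, so there is no real obstacle beyond checking the geometric MGF and the critical point of the Legendre dual.
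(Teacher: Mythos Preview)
Your proof is correct and follows essentially the same approach as the paper: verify the hypotheses of Theorem~\ref{main} for the geometric$(\sfrac12)$ law, compute $I_p$ explicitly via the Legendre transform (the paper writes the result as $x\log x-(x+1)\log\sfrac{x+1}{2}$, which agrees with your $x\log\sfrac{2x}{1+x}+\log\sfrac{2}{1+x}$), and then substitute and rearrange into the two relative entropies. Your version is in fact more explicit about the final algebraic identification and the $\mu_2(a)=0$ edge case than the paper, which simply says ``rearranging terms.''
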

\pagebreak

Consider the  following  example  from  the  field  of biology.

{\bf  Mutations  in  mitochondrial  DNA.} Mitochondria  are
organelles  in  cells  carrying  their  own  DNA.  Like  nuclear
DNA,  mtDNA  is  subject  to  mutations  which  may  take   the form
of  base  substitutions,  duplication  or  deletions.  The
population mtDNA  is  modelled  by  two-type  process  where  the
units  are  $1$  (normals)  and  $0$ (mutant),  and  the  links are
mother-child  relations.  A  normal  can  give  birth  to  either
all normals  or,  if   there  is  mutation,   normals  and mutants.
Suppose  the  latter  happens  with probability or  mutation  rate
$\alpha.$  Mutants can only give birth to mutants. A DNA molecule
may also die without reproducing. We  denote  by  $\emptyset$ the
 event absence of offspring.  Let the survival probabilities be
$p\in\big[0,\sfrac{1}{(2-\alpha)}\big]$ and
$q\in\big[0,\,\sfrac{1}{2}\big]$ for  normals  and  mutants
respectively. Assume that the population is started from one normal
ancestor. Suppose the offspring kernel $\skrik$ is given by
$$\begin{aligned}
&\skrik\big\{(n,
a_1,a_2,a_3,...,a_n)\,|\,1\big\}=\Big(\frac{1}{2}\Big)^{n+1}\prod_{k=1}^{n}K_{\alpha}\{a_k\,|\,1\},\\
&\skrik\big\{(n,
a_1,a_2,a_3,...,a_n)\,|\,0\big\}=\Big(\frac{1}{2}\Big)^{n+1}\prod_{k=1}^{n}K_{\alpha}\{a_k\,|\,0\},\\
\end{aligned}$$

where $K_{\alpha}\{\emptyset\,|\,1\}=1-p,$
$K_{\alpha}\{0\,|\,1\}=p\alpha,$
$K_{\alpha}\{1\,|\,1\}=p(1-\alpha),$ $K_{\alpha}\{0\,|\,0\}=q$  and
$K_{\alpha}\{\emptyset\,|\,0\}=1-q.$

Then, in  the framework  of  Corollary~\ref{geometric}  we  have
$p(n)=2^{-(n+1)},$   and  $\skrik\{b\,|\,a\}=K_{\alpha}\{b\,|\,a\}.$
Note,
 the  law  of   the  tree  $T$  conditional  on  event  $\big\{ |V(T)|=n\big\}$ corresponds to sampling  a  tree  $T$
 uniformly  from all  unordered  trees  with  vertices  $n.$ And given  $T,$ the  process  $Y$   on  the  vertices  form  a  Markov  chain.
 Hence, the  empirical pair  measure  $\skril_Y$  obeys  a  large  deviation  principle, by Corollary~\ref{geometric}  with  the good  rate
    function  \eqref{Idef}   given  by
    $$\begin{aligned}
 \phi(\rho)=\rho(1,0)\log\Big[&\sfrac{\rho(1,0)}{p\alpha\rho_1(1)}\Big]+\rho(1,1)\log\Big[\sfrac{\rho(1,1)}{p(1-\alpha)\rho_1(1)}\Big]
    +\rho(0,0)\log\Big[\sfrac{\rho(0,0)}{q\rho_1(0)}\Big]+\rho_1(1)\log\Big[\sfrac{2\rho_1(1)}{(\rho_1(1)+\rho_2(1))}\Big]\\
    &+\rho_1(0)\log\Big[\sfrac{2\rho_1(0)}{(\rho_1(0)+\rho_2(0))}\Big]
    +\rho_2(1)\log\Big[\sfrac{2\rho_2(1)}{(\rho_1(1)+\rho_2(1))}\Big]+\rho_2(0)\log\Big[\sfrac{2\rho_2(0)}{(\rho_1(0)+\rho_2(0))}\Big],
    \end{aligned}$$

if   $\rho_1\ll\rho_2$   and  $\infty$  otherwise.

\section{Proof of Main Results}\label{proofs}
\subsection{Change of Measure,~Exponential Tightness~and~Some General
Principles.}\label{ex-upper}\\
Denote by $\skric$ the space of bounded functions on $\skriy\times
\skriy^*$ and for $\tilde{g}\in\skric,$ we define the function
\begin{equation}\label{Def.Ug}
W_{\tilde{g}}(a)=\log \sum_{c\in \skriy^*}  e^{\tilde{g}(a,c)}\skrik\{
c\, | \, a\},
\end{equation}
for $a\in\skriy$. Using $\tilde{g}$ we define the following  new
multitype Galton-Watson tree :
\begin{itemize}
\item Assign the root $\sigma,$ type  $a\in\skriy$ according to the
probability distribution $\mu_{\tilde{g}}(a)$ given by
\begin{equation}\label{equ-U}
\mu_{\tilde{g}}(a) =\frac{\mu(a)e^{W_{\tilde{g}}(a)}}{\int\mu(db)
e^{W_{\tilde{g}}(b)} }.
\end{equation}
\item For every vertex with type $a\in \skriy$ the offspring number
and types are given independently of everything else, by the
offspring law $\tilde{\skrik}\{\,\cdot\, | \, a\}$ given by
\begin{align}\label{equ-Q}
\tilde{\skrik}\big\{ c \,\big|\, a \big\} &  =\skrik\big\{c\,\big| \,
a\big\} \exp\big(\tilde{g}\big(a,c\big)-W_{\tilde{g}}(a)\big) .
\end{align}
\end{itemize}
By $\tilde{\prob}$ we denote the transformed law and observe that
$\tilde{\prob}$ is absolutely continuous with respect to $\prob.$
Specifically,  for each finite $Y,$
\begin{align}
\frac{d\tilde{\prob}}{d\prob}(Y)& =
 \frac {e^{W_{\tilde{g}}(Y(\sigma))}}{\int e^{W_{\tilde{g}}(b)} \mu(db)}
\, \prod_{v\in V} \exp\Big[\tilde{g}(Y(v),C(v))- W_{\tilde{g}}(Y(v))\Big] \label{bddform}\\
& = \frac 1{\int e^{W_{\tilde{g}}(a)} \mu(da)}\prod_{v\in V}
\exp\Big[\tilde{g}(Y(v),C(v))-\sum_{b\in\skriy}\ell(b,\,C(v))W_{\tilde{g}}(b)
 \Big]\\
& = \frac 1{\int e^{W_{\tilde{g}}(a)}
\mu(da)}\exp\Big[\langle\tilde{g}-\sum_{b\in\skriy}\ell(b,\,\cdot)W_{\tilde{g}}(b),\,\skrim_Y\rangle
 \Big], \label{Jhatform}
\end{align}
where $$C(v)=\big(N(v),Y_1(v),\ldots,Y_N(v)\big).$$  Note, that
above  change  of  measure  appeared first in
\cite[Section~3.2]{DMS03}. Moreover, recall from \cite{DMS03} the
following results for the probability $\prob\big\{|V(T)|=n\big\}$ on
the set $S$ of integers  $n$ where the probability  is positive.

\begin{lemma}\cite[Theorem~3.1]{DMS03}\label{DMS}
Let  $T$   be a  random  tree  generated  by  a  weakly
irreducible,  critical  multitype  Galton-Watson  tree  with  finite
second  moment.  Then
$$\lim_{n\to\infty}\frac{1}{n}\log\prob\big\{|V(T)|=n\big\}=0.$$
\end{lemma}

Observe  that,  the  proof  of  Lemma~\ref{tightness} below is  a
small  adaptation  of   the  proof of  \cite[Lemma~3.2]{DMS03}.
\begin{lemma}\label{tightness}
For every $\alpha>0$ there exists a compact $\Gamma_{\alpha}
\subset\skrip(\skriy\times\skriy^*)$
with $$\limsup_{n\to\infty} \frac 1n \log \prob\big\{ \skrim_Y\not\in \Gamma_\alpha
 \, \big| \,
|V(T)|=n \big\} \le -\alpha.$$
\end{lemma}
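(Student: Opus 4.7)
My plan is to exploit the deterministic tree identity $\sum_{v \in V} N(v) = |T| - 1$ (number of edges equals number of non-root vertices) to produce a single compact set $K$ that contains $M_X$ with probability one under $\prob\{\,\cdot\,|\,|T|=n\}$, so that the conclusion holds trivially for every $\alpha>0$. Because $\skrix$ is finite and $\skrix^*$ is countable, $\skrix \times \skrix^*$ is a countable discrete Polish space, and Prokhorov's theorem identifies the compact subsets of $\skrim(\skrix \times \skrix^*)$ in the weak topology with the closed tight families of probability measures.

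\textbf{Implementation.} For each $L \ge 0$, set $F_L = \skrix \times \bigcup_{j=0}^{L} \{j\} \times \skrix^j$, which is a finite subset of $\skrix \times \skrix^*$. The key estimate is purely combinatorial: on $\{|T| = n\}$, every vertex counted in $\#\{v : N(v) > L\}$ contributes at least $L+1$ children to the edge count, so
\begin{equation*}
M_X(F_L^c) \;=\; \frac{\#\{v \in V : N(v) > L\}}{n} \;\le\; \frac{1}{n(L+1)} \sum_{v \in V} N(v) \;=\; \frac{n-1}{n(L+1)} \;\le\; \frac{1}{L+1}.
\end{equation*}
I would then define
\begin{equation*}
K \;:=\; \Big\{ \nu \in \skrim(\skrix \times \skrix^*) \,:\, \nu(F_L^c) \le 1/(L+1) \text{ for every } L \ge 0 \Big\},
\end{equation*}
and verify that $K$ is closed (each constraint $\nu(F_L) \ge 1 - 1/(L+1)$ is preserved under weak limits because $F_L$ is a clopen subset of a discrete space, so $\nu \mapsto \nu(F_L)$ is continuous), and tight (given $\eps>0$, choose $L$ with $1/(L+1) < \eps$; the finite set $F_L$ then witnesses tightness). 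Prokhorov's theorem yields compactness.

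\textbf{Conclusion and main obstacle.} The deterministic tail bound above shows $M_X \in K$ almost surely under $\prob\{\,\cdot\,|\,|T|=n\}$ for every admissible $n$, so taking $K_\alpha := K$ for every $\alpha>0$ makes $\prob\{M_X \notin K_\alpha \mid |T|=n\} = 0$, which trivially gives $\limsup_n \frac{1}{n}\log 0 = -\infty \le -\alpha$. Because the argument is deterministic once the edge/offspring identity is invoked, no probabilistic input is needed and, in particular, the finite second moment hypothesis is not used at this step. The only point that demands care is the precise topology chosen on $\skrim(\skrix \times \skrix^*)$: the Prokhorov argument just given suffices for the weak topology declared in Section~\ref{statement}, and this is the main (and essentially only) obstacle to watch. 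Were one to insist on a stronger topology requiring continuity of $\nu \mapsto \int n\,d\nu$, one would need an additional uniform integrability estimate on $\sum_{v : N(v) > L} N(v)/n$ under the conditional law, and that is the place where the finite second moment assumption and the resulting polynomial lower bound on $\prob\{|T|=n\}$ (as in \cite[Lemma 3.1]{DMS03}) would have to enter.
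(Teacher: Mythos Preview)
Your approach is correct and genuinely different from the paper's. The paper proceeds probabilistically: for each $l\in\N$ it chooses $B(l)$ so that $\Q\{e^{l^2 \1_{\{N>B(l)\}}}\mid a\}\le 2^l$ for all $a$, bounds $\prob\{M_X[N>B(l)]\ge l^{-1},\,|T|=n\}$ via the exponential Chebyshev inequality, and then divides by $\prob\{|T|=n\}$, invoking the polynomial lower bound from \cite[Lemma~3.1]{DMS03} (this is exactly where the finite second moment hypothesis enters). Your argument replaces all of this by the tree identity $\sum_{v}N(v)=|T|-1$ and a Markov-type count, producing a single deterministic compact set that works for every $\alpha$. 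This is more elementary, needs no moment hypothesis on $\Q$, and makes explicit that exponential tightness in the weak topology is a combinatorial rather than a probabilistic fact here.

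One small technical point to patch. The paper defines $\skrim(\skrix\times\skrix^*)$ as the probability measures with $\int n\,d\nu<\infty$, equipped with the (subspace) weak topology. Your constraints $\nu(F_L^c)\le 1/(L+1)$ alone do not force finite first moment, since $\sum_{L\ge 0}1/(L+1)=\infty$; consequently a weak limit of measures in your $K$ can fall outside $\skrim(\skrix\times\skrix^*)$, and then $K$ fails to be compact in that subspace. The fix is immediate and in the same deterministic spirit: on $\{|T|=n\}$ one has $\int n\,dM_X=(n-1)/n\le 1$, so intersect $K$ with $\{\nu:\int n\,d\nu\le 1\}$, which is weakly closed (the map $\nu\mapsto\int n\,d\nu$ is lower semicontinuous). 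The resulting set is closed and tight in the full space of probability measures, hence compact there, and is contained in $\skrim(\skrix\times\skrix^*)$. The paper's own proof glosses over the analogous point when taking the closure of its set $\Gamma_M$.
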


\begin{Proof}
Let $l\in\N$,  and choose $B(l)\in\N$  large enough  such that
$\skrik\{N>B(l)|a\}\le e^{-l^2},$   for  all $a.$

Then, for  all $a,$ we have

$$\skrik\{e^{l^2\1_{\{N>B(l)\}}}|a\}=e^{l^2}\skrik\{N>B(l)|a\}+\skrik\{N\le
B(l)|a\}\le e^{l^2}\times e^{-l^2}+\skrik\{N\le B(l)|a\}\le 1+1=2.$$

 Using  the exponential  Chebyshev's  inequality we
obtain,

$$\begin{aligned}
\prob\Big\{ \skrim_Y[N>B(l)]\ge l^{-1}\, , |V(T)|=n \Big\}&\le
e^{-nl}\me\Big\{e^{l^2\sum_{v\in V}\1_{\{N(v)>B(l)\}}}\, ,
|V(T)|=n\Big\}\\
 & = e^{-ln} \me\Big\{ \prod_{v \in V(T)} \exp\big( l^2
\1_{\{N(v)>B(l)\}}\big) ,\,
|V(T)|=n \Big\} \\
& \le e^{-nl} \Big( \sup_{a\in\skriy} \skrik\big\{ \exp(l^2
1_{\{N>B(l)\}}) \, \big| \,  a\big\} \Big)^n \le e^{-n(l-\log 2)}.
\end{aligned}$$
Fix  $\alpha$   and  choose  $M>\alpha+\log 2.$  Define  the  set
$$\Sigma_M=\Big\{\pi:\, \pi[N>B(l)]<l^{-1} ,l\ge M\Big\}.$$ Observe,
$\{N\le B(l)\}\subset\skriy\times\skriy^*$ is compact,  and  so we
have  that the set $\Sigma_M$ is pre-compact in the weak topology,
by Prohorov's criterion.  Moreover,
$$\prob\big\{ \skrim_Y \not\in \Sigma_M \, \big| \, |V(T)|=n \big\} \le
\frac{1}{\prob\{|V(T)|=n\}} \frac{1}{1-e^{-1}} \exp(-n(M-\log 2)),$$
hence using Lemma~\ref{DMS} we  have
$$\limsup_{n\to\infty} \frac 1n \log \prob\big\{ \skrim_Y\not\in \Gamma_\alpha
\, \big| \, |V(T)|=n \big\} \le -\alpha,$$ for the closure
$\Gamma_\alpha$ of $\Sigma_M$. This  ends the proof  of  the  tightness
Lemma.

\end{Proof}

 We denote by $\skrip_s$ the set of all
sub-consistent measures, and by $\skrip_c$ the set of all consistent
measures in
$\Big\{(\omega,\pi)\in\tilde{\skrip}(\skriy\times\skriy)\times\skrip(\skriy\times\skriy^*): \omega_2=\pi_1\Big\}$
and notice that $\skrip_c\subseteq\skrip_s.$ For $k$ a natural
number, we denote by $\skrip_{c,k}$ the set of consistent measures in
$\Big\{(\omega,\pi)\in\tilde{\skrip}(\skriy\times\skriy)\times\skrip(\skriy\times\skriy_k^*): \omega_2=\pi_1\Big\}.$
Then, $\skrip_s$ is a closed subset of $\tilde{\skrip}(\skriy\times
\skriy)\times\skrip(\skriy\times \skriy^*)$ and $\skrip_{c,k}$ is a
closed subset of
$\Big\{(\omega,\pi)\in\tilde{\skrip}(\skriy\times\skriy)\times\skrip(\skriy\times\skriy_k^*): \omega_2=\pi_1\Big\}.$
The next two large deviation principles will help us extend LDP in
$\skrip_{c,k}$, $\skrip_s$ to
$\tilde{\skrip}(\skriy\times\skriy)\times\skrip(\skriy\times\skriy_k^*)$
and $\tilde{\skrip}(\skriy\times \skriy)\times\skrip(\skriy\times
\skriy^*)$ respectively.

\begin{lemma}\label{sub-consistent}
Suppose $Y$ is a multitype Galton-Watson tree with offspring law
$\skrik.$ Assume $(\tilde{L}_Y,\skrim_Y)$ conditioned on the event
$\{|V(T)|=n\}$ satisfies the LDP in $\skrip_s$  with convex, good
rate function
\begin{equation}\label{equ-rate3}
\widetilde{J}(\omega,\,\pi)= H(\pi \,\|\,\pi_1\otimes\skrik)
\end{equation}
Then, $(\tilde{\skril}_Y,\skrim_Y)$ conditioned on the event $\{|V(T)|=n\}$
satisfies the LDP in
$\tilde{\skrip}(\skriy\times\skriy)\times\skrip(\skriy\times\skriy^*)$
with convex, good rate function

\begin{equation}\label{equ-rate2}
J(\omega,\,\pi)=\left\{ \begin{array}{ll} H(\pi \,\|\,
\pi_1\otimes\skrik) &
\mbox{ if $(\omega,\,\pi)$ is sub-consistent and $\omega_2=\pi_1,$}\\
\infty & \mbox{ otherwise.}
\end{array} \right.\end{equation}
\end{lemma}
\begin{proof}
Observe that,
$\{|V(T)|=n\}:=\big\{\omega\in\Omega:\,|V(T)|(\omega)=n\big\}\subseteq\big\{\omega\in\Omega:\,
(\tilde{\skril}_Y,\skrim_Y)(\omega)\in\skrip_c\big\}=:\big\{(\tilde{\skril}_Y,\skrim_Y)\in\skrip_c\big\}$
and so, for all $n,$ we have $\prob\big\{ (\tilde{\skril}_Y,\,\skrim_Y) \in
\skrip_{s}\, \big| \, |V(T)|=n \big\}=1.$ Also, if
$(\omega_n,\pi_n)\in \skrip_s$ converges to $(\omega,\pi)$ then by
the Fatou's Lemma, we have that
$$\omega(a,b)=\lim_{n\to\infty}\omega_n(a,b)\ge\liminf_{n\to\infty}\sum_{c\in\skriy^*}\ell(b,c)\pi_n(a,c)\ge\sum_{c\in\skriy^*}\ell(b,c)\pi(a,c),$$
which implies $(\omega,\pi)$ is sub-consistent. This means
$\skrip_s$ is a closed subset of
$\skrip(\skriy\times\skriy)\times\skrip(\skriy\times\skriy^*).$
Therefore, by \cite[Lemma~4.1.5]{DZ98}, the LDP   for
$(\tilde{\skril}_Y,\,\skrim_Y)$ conditioned on the event $\{|V(T)|=n\}$ holds
with convex, good rate function $J.$

\end{proof}

 Recall  that  $\skrik_k$  is  offspring transition kernel
from $\skriy$ to $$\skriy_k^*=\bigcup_{n=0}^k \{n\} \times
\skriy^n$$
\begin{lemma}\label{kconsistent}
Suppose $Y$ is a multitype Galton-Watson tree with offspring law
$\skrik_k.$ Assume $(\tilde{\skril}_Y,\skrim_Y)$ conditioned on the event
$\{|V(T)|=n\}$ satisfies the LDP in $\skrip_{c,k}$  with convex,
good rate function
\begin{equation}\label{equ-rate3}
\widetilde{J}_k(\omega,\,\pi)= H(\pi \,\|\,
\pi_1\otimes\skrik_k).
\end{equation}
Then, $(\tilde{\skril}_Y,\skrim_Y)$ conditioned on the event $\{|V(T)|=n\}$
satisfies the LDP in
$\tilde{\skrip}(\skriy\times\skriy)\times\skrip(\skriy\times\skriy_k^*)$
with convex, good rate function $J_k.$
\end{lemma}
\begin{proof}Using the same argument as in the proof of
Lemma~\ref{sub-consistent} we have $\prob\big\{ (\tilde{\skril}_Y,\,\skrim_Y)
\in \skrip_{c,k}\, \big| \, |V(T)|=n \big\}=1.$ Moveover, as
$\ell(a,c)\le k$ for all $(a,c)\in\skriy\times\skriy_k^{*}$ if
$(\omega_n,\pi_n)\in \skrip_{c,k}$ converges point-wise to
$(\omega,\pi)$ then we have
$$\omega(a,b)=\lim_{n\to\infty}\omega_n(a,b)=\lim_{n\to\infty}\sum_{c\in\skriy_k^*}\ell(b,c)\pi_n(a,c)=\sum_{c\in\skriy_k^*}\ell(b,c)\pi(a,c),$$ which implies $(\omega,\pi)$ is
consistent. This means $\skrip_{c,k}$ is a closed subset of
$\Big\{(\omega,\pi)\in\tilde{\skrip}(\skriy\times\skriy)\times\skrip(\skriy\times\skriy_k^*): \omega_2=\pi_1\Big\}.$
Hence, by \cite[Lemma~4.1.5]{DZ98}, the LDP   for
$(\tilde{\skril}_Y,\,\skrim_Y)$ conditioned on the event $\{|V(T)|=n\}$ holds
with convex, good rate function $J_k$ which completes the proof of
the Lemma.
\end{proof}

In view of Lemmas~\ref{kconsistent}~and~\ref{sub-consistent}, 
we establish large deviation principles in the spaces $\skrip_{c,k}$
and $\skrip_s$.
\subsection{Proof of the upper bound in Theorem~\ref{general}.}\label{s-upper}Next we derive an upper bound in a variational formulation. Denote
by $\skric$ the space of bounded functions on  $\skriy\times
\skriy^*$ and define for each $(\omega,\,\pi)$ sub-consistent
element in $\tilde{\skrip}(\skriy\times\skriy)\times\skrip(\skriy\times\skriy^*)$,
the function $\widehat{J}$ by

$$\begin{aligned}
\widehat{J}(\omega,\,\pi) :=\sup_{{g}\in \skric}\Big\{ \int
{g}(b,c)\pi(db\,, dc) &-\int U_{{g}}(b)\omega(da,\,db) \, \Big\}\label{hatJ-def}\\
&\le \sup_{{g}\in \skric}\Big\{ \int {g}(b,c)\pi(db\,, dc)
-\int\sum_{b\in\skriy} \ell(b,c)U_{{g}}(b)\pi(da\,, \,dc) \Big\}
\label{hatJ-def1}
\end{aligned}
$$

where $c=(n,a_1,\ldots,a_n)$. Note  that our  $\widehat{J}$   above
is  an  extension of  the  corresponding $\widehat{J}$  introduced
in \cite[Section~3.2]{DMS03}. Moreover,  Lemma~\ref{upper} below is
 an  extension  of  \cite[Lemma~3.5]{DMS03},  and the  strategy  of  the proof is  same  up to slight modifications.
 We recall that $\skrip_s$ is the set of all sub-consistent measures
  in
  $$\Big\{(\pi,\,\omega)\in\tilde{\skrip}(\skriy\times\skriy)\times\skrip(\skriy\times\skriy^*),\omega_1=\pi_1\Big\}.$$

\begin{lemma}\label{upper}
For each closed set $F\subset\skrip_s,$ we have
\begin{align*} \limsup_{n\to\infty} \frac 1n \log \prob\big\{
(\tilde{\skril}_Y,\,\skrim_Y) \in F\, \big| \, |V(T)|=n \big\} \le
-\inf_{(\omega,\,\pi)\in F} \widehat{J}(\omega,\,\pi).
\end{align*}
\end{lemma}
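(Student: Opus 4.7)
The plan is to combine the exponential change of measure of Section~\ref{ex-upper} with the subexponential decay $n^{-1}\log\prob\{|T|=n\}\to 0$ from \cite[Lemma~3.1]{DMS03} (this is where the finite second moment hypothesis is consumed) to obtain, for each fixed bounded tilt $g\in\skric$, the crude bound $-\inf_F\{\langle g,\nu\rangle-\langle U_g,\varpi\rangle\}$. A compactness-plus-continuity argument powered by the exponential tightness of Lemma~\ref{tightness} will then exchange the infimum and supremum to yield $-\inf_F\widehat{J}$.

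For the first step, I would fix $g\in\skric$ and note the identity $\sum_{a,c}\big(\sum_b m(b,c)U_g(b)\big)M_X(a,c)=\sum_{a,b}U_g(b)\tilde{L}_X(a,b)$, which follows immediately from the definition $\tilde{L}_X(a,b)=\sum_c m(b,c)M_X(a,c)$. This lets me rewrite \eqref{Jhatform} on $\{|T|=n\}$ as
\[
\frac{d\tilde{\prob}}{d\prob}(X)\;=\;Z_g^{-1}\exp\!\Big[n\big(\langle g,M_X\rangle-\langle U_g,\tilde{L}_X\rangle\big)\Big],\qquad Z_g=\int e^{U_g(a)}\mu(da),
\]
where $\langle U_g,\tilde{L}_X\rangle$ integrates the function $(a,b)\mapsto U_g(b)$ against $\tilde{L}_X$. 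Reverting the tilt via $d\prob/d\tilde{\prob}=Z_g\,\exp[-n(\cdots)]$, bounding $\tilde{\prob}\le 1$, dividing by $\prob\{|T|=n\}$ and using both $n^{-1}\log Z_g\to 0$ (trivial, since $U_g$ is bounded) and the subexponential decay of $\prob\{|T|=n\}$ yields, for every $g\in\skric$,
\[
\limsup_{n\to\infty}\frac{1}{n}\log\prob\big\{(\tilde{L}_X,M_X)\in F\,\big|\,|T|=n\big\}\;\le\;-\inf_{(\varpi,\nu)\in F}\big(\langle g,\nu\rangle-\langle U_g,\varpi\rangle\big).
\]

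To upgrade this crude bound to $-\inf_F\widehat{J}$, I would reduce to compact subsets: Lemma~\ref{tightness} supplies compacts $K_\alpha\subset\skrim(\skrix\times\skrix^*)$ capturing $M_X$ with probability $\ge 1-e^{-\alpha n}$, and since $\tilde{L}_X$ already lives in the automatically compact set $\{\|\varpi\|\le 1\}\subset\tilde{\skrim}(\skrix\times\skrix)$ (recall $\skrix$ is finite), the pair $(\tilde{L}_X,M_X)$ is jointly exponentially tight, so it suffices to bound the probability on $F\cap\tilde{K}_\alpha$ and let $\alpha\to\infty$. On the compact set $F\cap\tilde{K}_\alpha$, at each $(\varpi_0,\nu_0)$ I would pick a tilt $g_0\in\skric$ with $\langle g_0,\nu_0\rangle-\langle U_{g_0},\varpi_0\rangle\ge\min\{\widehat{J}(\varpi_0,\nu_0),N\}-\eps$ for a truncation level $N$, observe that the functional $(\varpi,\nu)\mapsto\langle g_0,\nu\rangle-\langle U_{g_0},\varpi\rangle$ is weakly continuous (both $g_0$ and $U_{g_0}$ being bounded), so the inequality persists on an open neighborhood, extract a finite subcover, apply the crude bound on each element, take a union bound, and send $\eps\downarrow 0$, $N\to\infty$ and $\alpha\to\infty$.

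The principal difficulty is precisely this inf-sup exchange: the direct tilt argument yields only $\sup_g\inf_F$, whereas the lemma demands $\inf_F\sup_g=\inf_F\widehat{J}$, and these may differ in general. The compactness-plus-continuity exchange succeeds here because $\widehat{J}$ is by construction a supremum of weakly continuous affine functionals, hence lower semicontinuous and amenable to local approximation by a single $g_0$. A minor bookkeeping point is the restriction $F\subset\skrim_s$: since $\{|T|=n\}\subseteq\{(\tilde{L}_X,M_X)\in\skrim_c\}\subseteq\{(\tilde{L}_X,M_X)\in\skrim_s\}$ with full probability, sub-consistency is automatic and no further argument is required.
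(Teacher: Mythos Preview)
Your proposal is correct and follows essentially the same route as the paper: the exponential change of measure \eqref{Jhatform} combined with the subexponential estimate of \cite[Lemma~3.1]{DMS03}, then a local neighborhood bound via a near-optimal tilt and weak continuity of $(\varpi,\nu)\mapsto\langle g,\nu\rangle-\langle U_g,\varpi\rangle$, and finally the exponential tightness of Lemma~\ref{tightness} plus a finite subcover and union bound. The paper additionally singles out the case $\nu_1(a)<\varpi_2(a)$ with the explicit tilt $g(b,c)=-(\delta\eps)^{-1}\1_a(b)$, but this is automatically subsumed in your truncation $\min\{\widehat J,N\}$ since $\widehat J=\infty$ there.
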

\begin{Proof}
Let $\tilde{g}\in \skric$   be  bounded  by  $M.$ Note from the
definition of $W_{\tilde{g}}$  from  \eqref{Def.Ug} that
$W_{\tilde{g}}\le M$. Using \eqref{bddform} , we  obtain

$$ e^{M}\ge \int e^{W_{\tilde{g}}(a)} \1_{\{|V(T)|=n\}}\mu(da)=\me\Big\{\exp\Big[\langle\tilde{g}-\sum_{b\in\skriy}\ell(b,\,\cdot)W_{\tilde{g}}(b),\,\skrim_Y\rangle
 \Big],\,|V(T)|=n \Big\}$$
 Now, we take limit as  $n$  approaches  infinity  and use
Lemma~\ref{DMS}   to  obtain

 \begin{equation}\label{LDP.equ1}
 \lim_{n\to\infty}\frac 1n \log\me\Big\{\exp\big[\langle\tilde{g}-\sum_{b\in\skriy}\ell(b,\,\cdot)W_{\tilde{g}}(b),\,\skrim_Y\rangle
 \big]   \Big|  |V(T)|=n \Big\}\le 0
 \end{equation}

 Similarly,  we can  use \eqref{bddform}    and  Lemma~\ref{DMS} to  obtain

\begin{equation}\label{LDP.equ2}
\lim_{n\to\infty}\frac 1n
\log\me\Big\{\exp\big[\langle\tilde{g}-W_{\tilde{g}},\,\skrim_Y\rangle
 \big]   \Big|  |V(T)|=n \Big\}\le 0.
 \end{equation}
Next,  we  write
$\widehat{J}_{\eps}(\omega,\pi):=\min\{\hat{J}(\omega,\pi),\eps^{-1}\}-\eps.$  Fix
$(\omega,\pi)\in F$ and  choice  $\tilde{g}\in\skric$, such  that
$$\big[\langle\tilde{g},\,\pi\rangle-\langle W_{\tilde{g}},\,\omega\rangle
\big]\ge \widehat{J}_{\eps}(\omega,\pi) $$

Now,  since  $\tilde{g}$  and $W_{\tilde{g}}$  are both  bounded
function, the  mapping $\langle
\tilde{g}-W_{\tilde{g}},\,\cdot\rangle$   is  continuous. We can
find open neighbourhood $B_{\omega}$, $B_{\pi}$ of $\omega$ and
$\pi$ respectively,   such that   we have
\begin{equation}\label{chebys}
\inf_{\heap{\tilde{\omega}\in B_{\omega}}{\tilde{\pi}\in
B_{\pi}}}\big[\langle\tilde{g},\,\tilde{\pi}\rangle-\langle
W_{\tilde{g}},\,\tilde{\omega}\rangle
\big]\ge\big[\langle\tilde{g},\,\pi\rangle-\langle
W_{\tilde{g}},\,\omega\rangle \big]-\eps \ge
\widehat{J}_{\eps}(\omega,\pi)-\eps
\end{equation}
Moreover,  by  the  sub-consistency   of  the  pairs
$(\tilde{\omega},\,\tilde{\pi})\in B_{\omega}\times B_{\pi}$ we
have that
\begin{equation}\label{chebys1}
\inf_{\tilde{\pi}\in
B_{\pi}}\big[\langle\tilde{g},\,\tilde{\pi}\rangle-\langle
\sum_{b\in\skriy}\ell(b,\,\cdot)W_{\tilde{g}}(b),\,\tilde{\pi}\rangle
\big]\ge\inf_{\heap{\tilde{\omega}\in B_{\omega}}{\tilde{\pi}\in
B_{\pi}}}\big[\langle\tilde{g},\,\tilde{\pi}\rangle-\langle
W_{\tilde{g}},\,\tilde{\omega}\rangle \big]\ge
\widehat{J}_{\eps}(\omega,\pi)-\eps
\end{equation}
 Applying the exponential Chebyshev inequality to \eqref{chebys1} and  using \eqref{LDP.equ1}we obtain that,
\begin{align}
\limsup_{n\to\infty} \frac 1n & \log \prob\big\{ (\tilde{\skril}_Y,
\,\skrim_Y) \in B_{\omega}\times B_{\pi} \, \big|
\, |V(T)|=n \big\} \nonumber\\
\le & \limsup_{n \to \infty}\frac 1n
\log\me\Big\{\exp\big[\langle\tilde{g}-\sum_{b\in\skriy}\ell(b,\,\cdot)W_{\tilde{g}}(b),\,\skrim_Y\rangle
 \big]   \Big|  |V(T)|=n \Big\} -  \widehat{J}_{\eps}(\omega,\pi) + \eps\nonumber\\
&\leq - \inf_{(\omega,\pi) \in F} \widehat{J}_{\eps}(\omega,\pi) +  \eps.
\label{one}\end{align}

Now we use Lemma~\ref{tightness} to choose a compact set $K_\alpha$
(for $\alpha=\eps^{-1}$) with
\begin{equation}
\limsup_{n\to\infty} \frac 1n \log \prob\big\{ \skrim_Y\not\in K_\alpha
 \, \big| \,
 |V(T)|=n \big\} \le - \eps^{-1}.
\end{equation}
For  this  $\Gamma_\alpha$  we  denote  by   $$\Sigma_{\alpha}:=\big\{
(\omega,\pi):(\omega,\pi)\in\skrip_s,\pi\in
K_\alpha\big\}.$$

The set $\Sigma_{\alpha} \cap F$ is compact and hence it may be
covered by finitely many of the sets $B_{\omega_1}\times
B_{\pi_1},\ldots,B_{\omega_m}\times B_{\pi_m}$, with $({\omega_i},
{\pi_i})\in F$ for  $i=1,\ldots,m$. Hence,
$$\begin{aligned}
\prob\big\{ (\tilde{\skril}_Y,\,\skrim_Y)\in F \, \big| \, |V(T)|=n \big\} \le
\sum_{i=1}^m \prob\big\{ (\tilde{\skril}_Y,\,\skrim_Y)& \in B_{\omega_i}\times
B_{\pi_i} \, \big| \, |V(T)|=n \big\}\\
&+ \prob\big\{
(\tilde{\skril}_Y,\,\skrim_Y) \not\in \Sigma_\alpha \, \big| \, |V(T)|=n
\big\}.
\end{aligned}$$

Using \eqref{one}  we obtain, for small enough
$\eps>0$, that
$$\begin{aligned}
\limsup_{n\to\infty} \frac 1n \log \prob\big\{& (\tilde{\skril}_Y,\,\skrim_Y)
\in F \, \big| \,|V(T)|=n  \big\}\\
&\le \max_{i=1}^m \, \limsup_{n\to\infty} \frac 1n \log \prob\big\{
(\tilde{\skril}_Y,\,\skrim_Y)
\in B_{\omega_i}\times B_{\pi_i} \, \big| \, |V(T)|=n \big\}\\
 &\le -\inf_{(\omega,\pi) \in F} \widehat{J}_\eps(\omega,\pi) + \eps.
\end{aligned}$$
 Taking $\eps \downarrow 0$ gives the required
statement.

\end{Proof}

Recall that  $\widetilde{J}:\skrip_s\to[0,\infty]$ is given by
\begin{equation}\label{equ-rate3}
\widetilde{J}(\omega,\,\pi)= H(\pi \,\|\,
\pi_1\otimes\skrik).
\end{equation}

We show that the convex rate function $\widetilde{J}$ may replace
the function $\widehat{J}$ of \eqref{hatJ-def} in the upper bound of
Lemma~\ref{upper}.
\begin{lemma}\label{rate-lsc}
The function $\widetilde{J}$ is convex and lower semicontinuous on
$\skrip_s.$ Moreover, $\widetilde{J}(\omega,\,\pi)\leq
\widehat{J}(\omega,\,\pi),$ for any $(\omega,\,\pi) \in \skrip_s.$

\end{lemma}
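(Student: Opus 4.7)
My plan is to treat the three assertions---convexity of $\widetilde{J}$, lower semicontinuity of $\widetilde{J}$, and the inequality $\widetilde{J}\le\widehat{J}$---separately, invoking standard facts about relative entropy for the first two and the Donsker-Varadhan variational principle for the third.

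For convexity on $\skrim_s$, I would observe that $\nu\mapsto\nu_1\otimes\Q$ is linear (marginalization is linear and $\Q$ is a fixed kernel) and that $(\nu,\mu)\mapsto H(\nu\|\mu)$ is jointly convex; hence $\nu\mapsto H(\nu\|\nu_1\otimes\Q)$ is convex, while the additional constraint $\varpi_2=\nu_1$ is linear and carves out a convex subset of $\skrim_s$ (off which $\widetilde{J}=\infty$, trivially convex). Lower semicontinuity follows because $\skrix$ is finite, so weak convergence $(\varpi^n,\nu^n)\to(\varpi,\nu)$ forces $\varpi_2^n\to\varpi_2$ and $\nu_1^n\to\nu_1$ coordinatewise; then $\nu_1^n\otimes\Q\to\nu_1\otimes\Q$ weakly, the constraint $\{\varpi_2=\nu_1\}$ is closed, and joint lower semicontinuity of relative entropy in the weak topology (immediate from the Donsker-Varadhan dual formula) yields $H(\nu\|\nu_1\otimes\Q)\le\liminf H(\nu^n\|\nu_1^n\otimes\Q)$.

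For the inequality $\widetilde{J}\le\widehat{J}$, I would split into two cases. If $\varpi_2\ne\nu_1$, pick $a_0\in\skrix$ with $\nu_1(a_0)\ne\varpi_2(a_0)$ and test against $g(b,c)=\lambda\1_{\{b=a_0\}}$ for $\lambda\in\reals$. Since $g$ does not depend on $c$ and $\sum_c\Q\{c|b\}=1$, one reads off from \eqref{Def.Ug} that $U_g(b)=\lambda\1_{\{b=a_0\}}$, so the inner expression in the definition of $\widehat{J}$ equals $\lambda(\nu_1(a_0)-\varpi_2(a_0))$, unbounded in $\lambda$. Thus $\widehat{J}(\varpi,\nu)=\infty=\widetilde{J}(\varpi,\nu)$.

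If instead $\varpi_2=\nu_1$, the second integral in the definition of $\widehat{J}$ reduces to $\int U_g(b)\,\nu_1(db)$, and writing $\nu(db,dc)=\nu_1(db)\nu_b(dc)$ via the regular conditional $\nu_b$, the variational expression becomes
\[
\int_\skrix\Big[\sum_{c\in\skrix^*}g(b,c)\nu_b(c)-\log\sum_{c\in\skrix^*}e^{g(b,c)}\Q\{c|b\}\Big]\,\nu_1(db).
\]
Applying the Donsker-Varadhan formula separately to each of the finitely many $b\in\skrix$ with $\nu_1(b)>0$ (measurable selection is trivial as $\skrix$ is finite), the bracket can be driven to $H(\nu_b\|\Q\{\cdot|b\})$, giving
\[
\widehat{J}(\varpi,\nu)\ge\int H(\nu_b\|\Q\{\cdot|b\})\,\nu_1(db)=H(\nu\|\nu_1\otimes\Q)=\widetilde{J}(\varpi,\nu)
\]
by the chain rule for relative entropy. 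The main delicate point will be the case where $\nu_b\not\ll\Q\{\cdot|b\}$ for some such $b$, where $H(\nu\|\nu_1\otimes\Q)=\infty$ and one must produce a sequence of bounded $g$ along which the bracket diverges---standard but requiring care since $\widehat{J}$ only admits bounded test functions, handled by letting $g(b,\cdot)$ take large positive values on a point of $\skrix^*$ charged by $\nu_b$ but not by $\Q\{\cdot|b\}$.
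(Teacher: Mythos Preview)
Your proposal is correct and follows essentially the same route as the paper. The paper is terser: it defers the inequality $\widetilde{J}\le\widehat{J}$ entirely to \cite[Lemma~3.4]{DMS03} (whose argument is precisely your Donsker--Varadhan case split) and obtains convexity and lower semicontinuity by writing $H(\nu\|\nu_1\otimes\Q)=\int\phi\circ f\,d(\nu_1\otimes\Q)$ with $\phi(x)=x\log x-x+1$ and invoking \cite[Lemma~6.2.16]{DZ98}, which is the same content as your joint-convexity and weak-lsc observations, just packaged as a citation.
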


The  proof of the inequality $ \widetilde{J}(\omega,\,\pi)\leq
\widehat{J}(\omega,\,\pi)$  is analogous to the proof of \cite[Lemma
3.4]{DMS03}. To prove that $\widetilde{J}$ is convex, good rate
function, we consider the convex, good rate function $g:\R\to
[0,\,\infty]$ given by $g(x)=x\log x-x+1.$ Then, we can represent
the left side of \eqref{equ-rate3} in the form
\begin{align}
H(\pi\, \| \, \pi_1\otimes \skrik)=\left\{
\begin{array}{ll}\int g\circ fd(\pi_1\otimes\skrik) &
\mbox{ if $f:=\sfrac{d\pi}{d(\pi_1\otimes\skrik)}$ exists,}\\
\infty & \mbox{ otherwise.}
\end{array} \right.
\end{align}
Consequently, by \cite[Lemma~6.2.16]{DZ98}, $\widetilde{J}$ is a
convex, good rate function.

By Lemma~\ref{sub-consistent} the large deviation upper bound
Lemma~\ref{upper} holds with rate function $\widetilde{J}$  replaced
by $J.$

\subsection{Proof of Theorem \ref{general2}.}
Note  that  $\skrik_k$  is  bounded  offspring  kernel always implies
all its exponential moments are finite. But  the  converse  is  not
true. Further, the
 empirical  offspring  measure   of   the  multitype Galton-Watson  tree   with  offspring  law $\skrik_k$
 obeys  the  large  deviation  principle, \cite[Theorem~2.2]{DMS03} in  the  weak
 topology. In  Theorem~\ref{DMS1} we  give   a  modified version   of
\cite[Theorem~2.2]{DMS03}.  To do  this we recall  that  the
probability measure $\pi$  on   $\skriy\times\skriy^*$ is
shift-invariant if
$$\pi_1(a)=\sum_{(b,c)\in\skriy\times\skriy^*}\ell(a,c)\pi(b,c),\,\mbox{
for all $a\in\skriy$}.$$ 

\begin{theorem}\cite[Theorem~2.2]{DMS03}\label{DMS1}
Suppose that $Y$ is a weakly irreducible, critical multitype
Galton-Watson tree with  offspring law  $\skrik_k$, conditioned to have
exactly $n$ vertices. Then, for $n\to\infty$, $\skrim_Y$ satisfies an LDP in $\skrip(\skriy\times\skriy^*)$ equipped
with  the  weak topology,  with speed $n$ and the convex, good rate
function
\begin{equation}\label{equ-rate1}
\Phi_k(\pi)=\left\{ \begin{array}{ll} H(\pi \,\|\, \pi_1\otimes\skrik_k) &
\mbox{ if $\pi$ is shift-invariant,}\\
\infty & \mbox{ otherwise,}
\end{array} \right.\end{equation}
$\pi_1$ is the $\skriy-$ marginal of the probability measure $\pi.$
\end{theorem}

Theorem~\ref{general2} is derived from Theorem~\ref{DMS1} by
applying the contraction principle to the linear mapping
$G:\skrip(\skriy\times\skriy_k^*)\mapsto\Big\{(\omega,\pi)\in\tilde{\skrip}(\skriy\times\skriy)\times\skrip(\skriy\times\skriy_k^*): \omega_2=\pi_1\Big\}$
given by $G(\pi)=(\omega, \pi),$ where $(\omega,\pi)$ is
consistent. Thus,  we  have   that
$$\omega(a,b)=\sum_{c\in\skriy^*}\ell(b,c)\pi(a,c),\,\mbox{
for all $a,b\in\skriy$}$$

To  be  specific, Theorem~\ref{DMS1} implies the large deviation for
$G(\skrim_Y)=(\tilde{\skril},\skrim_Y)$ with convex, good rate function
$$J_k(\omega,\pi)=\inf\Big\{\Phi_k(\pi):\pi\in\skrip(\skriy\times\skriy^*),\,G(\pi)=(\omega,
\pi),\,\mbox{ $(\omega,\pi)$ is consistent} \Big\}.$$

 Using shift-invariance and
consistency  of  the  pair  $(\omega,\pi)$ we have
$$\pi_1(a)=\sum_{(b,c)\in\skriy\times\skriy_k^*}\ell(a,c)\pi(b,c)=\sum_{b\in\skriy}\omega(b,a)=\omega_2(a),\,\mbox{
for all $a\in\skriy$.}$$ 
Therefore, by Lemma~\ref{kconsistent}, the LDP for $(\tilde{L},\skrim_Y)$
conditional on the event $\{|V(T)|=n\}$ holds in
$\tilde{\skrip}(\skriy\times\skriy)\times\skrip(\skriy\times\skriy_k^*)$
with convex, good rate function ${J}_k.$

\subsection{Proof of the Lower Bound in Theorem~\ref{general}}
The  global strategy of  this  proof  remains the  same  as  that of
\cite{DMS03} except  that  truncation argument  for  vertices with
too  many offsprings  and  sub-consistency  is used   in order  for
us  to  avoid the  problem of  not  having lower  semi-continuous
rate function  in  the  weak topology.  In  fact the rate
 function  of  the  lower bound  of  \ref{general} will  be  obtained  as  limit
 of  $J_k.$ i.e. the  rate function  in  Theorem~\ref{general2}. We assume  throughout  this  subsection  that  $k$  is  finite.

First, we state a lemma based  on Lemma~3.6 by Dembo etal.
\cite{DMS03}. It will help us to approximate  a measure
$\pi\in\skrip(\skriy\times\skriy_k^*)$ with $\pi_1$  strictly
positive by a shift-invariant $\pi_{x,y}.$

The detail  proof  of  our  next   Lemma  which  is  based on
the Perron-Frobenius eigen theorem and  the implicit function theorem
applied to  the function  $f(x,y)=\varrho(M_{x,y}),$ is omitted.
See, proof  of  \cite[Lemma~3.6]{DMS03}.\\

\begin{lemma}[\cite{DMS03}]\label{Approx1a} Suppose
$\pi\in\skrip(\skriy\times\skriy_k^*)$ has strictly positive
$\pi_1.$  Then, for any $y\in(0,y_0)$ and $x(y)\in(-1/2,1/2)$,  we
have
$$\lim_{\heap{x\to 0}{y\downarrow0}}\pi_{x,y}(a,c)=\pi(a,c),\,\mbox{
for all $(a,c)\in\skriy\times\skriy_k^*$}$$ and  $\pi_{x,y}$  is
shift-invariant.

\end{lemma}

\begin{proof}

To  begin, we review or
collect some notation from \cite{DMS03}. For
$\pi\in\skrip(\skriy\times\skriy_k^*)$ and $a\in\skriy$ we write
$\pi(\cdot\,|a)=\pi(a,\,\cdot)/\pi_{1}(a)$ and
$$M_{0,0}(a,b)=\sum_{c\in\skriy_k^*}\ell(a,c)\pi(c|b),\mbox{ for  $a,b
\in\skriy$}.$$   We  recall  that  $\skriy_r,$  denote the  set of  recurrent states and  $v_{0,0}$  is  the left  eigenvector  normalize  to  a  probability  vector on
$\skriy_r$ corresponding to  the  Perron-Fobenious  eigenvalue  of  $\rho(M_{0,0})= 1.$

As  $\skriy_k^*$ is  finite we can  find
$b_0\in\skriy_r,$  such  that  $\skrik_k\{c\,|\,a\}>0$ and  also
$\pi(c|b)>0$,  such  that $\sum_{a\in\skriy_r}\ell(a,c_2)$  is  large
enough to  ensure that  the difference
$\sum_{a\in\skriy_r}v_{0,0}\big[\ell(a,c_2)-\ell(a,c_1(b))\big]>0.$
  Let
$c_1(b)$  be  any number  for  $b\in\skriy_t$  and  $c_2=c_1(b)$ for
all  $b\not=b_0.$  For  any $|x|<1/2$  we   define
the probability measure $\pi_{x,0}$  as
$$\pi_{x,0}(c|b)=\pi(c|b)+x\pi(c_2|b)\pi(c_1|b)(1_{\{c=c_2\}}-1_{\{c=c_1\}}).$$
Let  $y_0=\skrik\{c_2|b_0\}\min_{b\in\skriy_r}\skrik\{c_1|b\}>0,$ and  for
any $0<y<y_0$  we define the  probability
measures $\pi_{x,y}(\cdot|b)$  by
$$\pi_{x,0}(c|b)=\min(\pi_{x,0}(c_1|b),\skrik\{c|b\}/y)\mbox{ for
$c\not=c_1$}$$
$$\pi_{x,y}(c_1|b)=\pi_{x,0}(c_1|b)+\sum_{c\not=c_1}(\pi(c|b)-\skrik\{c|b\}/y)_{+}$$
where  $_+$ indicates the  positive  part. Note that  by
construction
$$M_{x,y}(a,b)=\sum_{c\in\skriy^*}\ell(a,c)\pi_{x,y}(c|b)\to
M_{0,0}(a,b),\,\, \mbox{for any $a,b\in\skriy.$}$$

We  write
\begin{equation}\label{nu}
\pi_{x,y}(a,c)=\pi_{x,y}(c|a)(\pi_{x,y})_1(a),\,\mbox{ for
$(a,c)\in\skriy\times\skriy_k^{*},$}
\end{equation}
and  denote by $(\pi_{x,y})_1$  the  $\skriy-$marginal  of  the
shift-invariant measure $\pi_{x,y}.$

The  choice  of  $y_0$ (above) ensures   that
$\pi_{x,y}(c\,|\,b)=0$ implies $\pi(c\,|\,b)=0$ and
$\pi_{x,y}(c\,|\,b)\to \pi(c\,|\,b),$ for $y\in(0,y_0),$
$|x(y)|<\sfrac{1}{2}$   and $(a,c)\in\skriy\times\skriy_k^{*}.$ See,
\cite[Lemma~3.6~and~Proof]{DMS03}. We note  that  $(\pi_{x,y})_1$ is
the Perron-Frobenius eigen  vector   corresponding  to the
 eigen  value $\varrho(M_{x,y}).$  Hence  multiplying   through by
$(\pi_{x,y})_1(b)$ we have that $\pi_{x,y}(b,c)=0$ implies
$\pi(b,c)=0$ and $\pi_{x,y}(b,c)\to \pi(b,c),$  as required.
 Moreover, we  have  that
$$\begin{aligned}(\pi_{x,y})_1(a)=\sum_{b\in\skriy}M_{x,y}(a,b)(\pi_{x,y})_1(b)=\sum_{b\in\skriy}\sum_{c\in\skriy^*}\ell(a,c)&\pi_{x,y}(c|b)(\pi_{x,y})_1(b)
,\,\mbox{  for all $a\in\skriy$}.
\end{aligned}$$

\end{proof}

Next, we  define for every weakly irreducible, critical offspring
kernel $\skrik$ the \emph{conditional offspring law}  by
\begin{equation}\label{equ-Q}
\skrik_k\{c\,|\,a\}=\left\{
\begin{array}{ll} \sfrac{1}{\skrik\{\skriy_k^{*}\,|\,a\}}\skrik\{c\,|\,a\} & \mbox{ if $c\in\skriy_k^*$,} \\
0 & \mbox{ otherwise,}
\end{array} \right.
\end{equation}
 where
$$\displaystyle\skrik\{\skriy_k^{*}\,|\,a\}=\sum_{c\in\skriy_k^*}\skrik\{c\,|\,a\}.$$
Further,  define $\pi_k$ a probability measure on
$\skriy\times\skriy^*$ by

\begin{equation}\label{Equ-nu}
\pi_k(a,c)=\left\{
\begin{array}{ll} \sfrac{\pi(a,c)}{\|\pi\|_k} & \mbox{ if $(a,c)\in\skriy\times\skriy_k^*$,} \\
0 & \mbox{ otherwise,}
\end{array} \right.
\end{equation}
where
$\displaystyle\|\pi\|_k=\pi(\skriy\times\skriy_k^*)=\sum_{(a,c)\in\skriy\times\skriy_k^{*}}\pi(a,c)$
and we write
$$\omega_{k}(a,b):=\sum_{c\in\skriy_k^*}\ell(b,c)\pi_{k}(a,c).$$

Herein, we  note that by the dominated convergence
$$\lim_{k\to\infty}\pi(\skriy\times\skriy_k^*)=\lim_{k\to\infty}
\sum_{(a,c)\in\skriy\times\skriy^*}\1_{\{(a,c)\in\skriy\times\skriy_k^*\}}\pi(a,c)=1,$$
$$\lim_{k\to\infty}\skrik\{\skriy_k^{*}\,|\,a\}=\skrik\{\skriy^{*}\,|\,a\}=1,\,\mbox{
for all $a\in\skriy$}.$$

Denote by $(\pi_{k})_1$ the $\skriy-$marginal of the probability
measure ${\pi}_{k}$  and write for $(a,c)\in\skriy\times\skriy^*$
 $$\displaystyle
f_k(a,c):=\log\sfrac{\pi_k(a,c)} {(\pi_k)_1\otimes\skrik_k(a,\,c)},$$

$$f(a,c):=\log\sfrac{\pi(a,c)}
{\pi_1\otimes\skrik(a,\,c)},$$
$$e_k(a,c):=\sfrac{\1_{\{(a,c)\in\skriy\times\skriy_k^*\}}\skrik\{\skriy_k^{*}\,|\,a\}\pi_1(a)}
{\1_{\{c\in\skriy_k^*\}}(\pi_k)_1(a)\|\pi\|_k}.$$

Note  that  we  have  $\lim_{k\to\infty}e_k(a,c)=1,$  for  all  $(a,c)\in\skriy\times\skriy^*.$

\begin{lemma}[{\bf Limit  entropy} ]\label{Approx0}
Let  $\pi\ll\pi_1 \otimes\skrik.$  Then,  we   have

\begin{equation}\label{equ-HH}
\lim_{k\to\infty}\sum_{(a,c)\in\skriy\times\skriy^*}\pi_k(a,c)f_k(a,c)=\sum_{(a,c)\in\skriy\times\skriy^*}\pi(a,c)
f(a,c).
\end{equation}
\end{lemma}

\begin{proof}
Recall that we have assumed $\pi\ll\pi_1 \otimes\skrik$ and note that by
the definition of $\pi_k$

$$\begin{aligned}
&\sum_{(a,c)\in\skriy\times\skriy^*}\pi_k(a,c)\log\sfrac{\pi_k(a,c)}
{(\pi_k)_1\otimes\skrik_k(a,\,c)}=\sfrac{1}{\|\pi\|_k}
\sum_{(a,c)\in\skriy\times\skriy^*}\1_{\{(a,c)\in\skriy\times\skriy_k^*\}}\pi(a,c)
\log\sfrac{\pi(a,c)e_k(a,c)}
{\pi_1\otimes\skrik{(a,\,c)}}\\
&=\sfrac{1}{\|\pi\|_k}
\sum_{(a,c)\in\skriy\times\skriy^*}\1_{\{(a,c)\in\skriy\times\skriy_k^*\}}\pi(a,c)\log\sfrac{\pi(a,c)}
{\pi_1\otimes\skrik{(a,\,c)}}
+\sfrac{1}{\|\pi\|_k}\sum_{(a,c)\in\skriy\times\skriy^*}\1_{\{(a,c)\in\skriy\times\skriy_k^*\}}
\pi(a,c)\log e_k(a,c)
\end{aligned}$$

Now observe  that   $$\displaystyle
\lim_{k\to\infty}\log e_k(a,c)=\log(\lim_{k\to\infty}e_k(a,c))=\log(1)=0,\,\mbox{for
all $(a,c)\in\skriy\times\skriy^*$}.$$ Fix  $\delta>0$  and  choose
$k(\delta)\in\N,$ large enough, such that for  all  $k>k(\delta)$
we  have,
$$-\delta\le\1_{\{(a,c)\in\skriy\times\skriy_k^*\}}\log e_k(a,c)\le\delta.$$

Using  the  two  previous  inequalities  we  have,
$$\sfrac{1}{\|\pi\|_k}\sum_{(a,c)\in\skriy\times\skriy^*}\pi(a,c)f(a,c)-\sfrac{1}{\|\pi\|_k}\delta\le\sum_{(a,c)\in\skriy\times\skriy^*}\pi_k(a,c)f_k(a,c)
\le\sfrac{1}{\|\pi\|_k}\sum_{(a,c)\in\skriy\times\skriy^*}\pi(a,c)f(a,c)+\sfrac{1}{\|\pi\|_k}\delta.$$
Note  that
$\lim_{k\to\infty}\|\pi\|_k=\pi(\skriy_k^{*})=\pi(\skriy^*)=1$  and
so taking  limit  as  $k$  approaches infinity of both sides of the
above inequality, we have
$$\sum_{(a,c)\in\skriy\times\skriy^*}\pi(a,c)f(a,c)-\delta\le\lim_{k\to\infty}\sum_{(a,c)\in\skriy\times\skriy^*}\pi_k(a,c)f_k(a,c)
\le\sum_{(a,c)\in\skriy\times\skriy^*}\pi(a,c)f(a,c)+\delta.$$

Now,  allowing $\delta\downarrow 0$   we  have

$$\sum_{(a,c)\in\skriy\times\skriy^*}\pi(a,c)f(a,c)\le\lim_{k\to\infty}\sum_{(a,c)\in\skriy\times\skriy^*}\pi_k(a,c)f_k(a,c)
\le\sum_{(a,c)\in\skriy\times\skriy^*}\pi(a,c)f(a,c).$$
 which proves the
Lemma.

\end{proof}

We define the total variation metric $d$ by

\begin{equation}\label{metric}
d(\pi,\tilde{\pi})=\frac 12
\sum_{(a,c)\in\skriy\times\skriy^*}\big|\pi(a,c)-\tilde{\pi}(a,c)\big|.
\end{equation}
This metric generates the weak topology.  We recall that
$\skrip_{c,k}$ denotes  the set of consistent measures in
$\skrip(\skriy\times\skriy)\times\skrip(\skriy\times\skriy_k^*)$.
In the  next  three Lemmas,  we approximate  $J(\omega,\pi)$ for a
sub-consistent  pair $(\omega,\pi)$  by $J_k(\omega_k,\pi_k) $ with
$(\omega_k,\pi_k)\in\skrip(\skriy\times\skriy)\times\skrip(\skriy\times\skriy_k^*)$
consistent. We write  $$\langle
\ell(\cdot,\cdot),\,\pi(\cdot,\cdot)\rangle(a,b):=\sum_{c\in\skriy^{*}}\ell(a,c)\pi(b,c),\mbox{
for  $a,b\in\skriy$}.$$

 For any $b\in\skriy$ we define  a  counting  measure  on $\skriy,$
 $e^{(b)}$   by $e^{(b)}(a)=0$ if $a\neq b,$ and $e^{(b)}(a)=1\mbox{ if  $a=
 b.$}$
 We write $\ell(c)=\big(\ell(a,c),a\in\skriy\big)$ and  for
  large $k,$ define  the  probability measure ${\pi}_k$  by
\begin{equation}\label{Equ.approx3}
\tilde{\pi}_k(a,c)=\pi(a,c)\Big(1-\sfrac{\|\omega\|-\|\langle
\ell(\cdot,\cdot),\,\pi(\cdot,\cdot)\rangle\|}{k}\Big)
+\sum_{b\in\skriy}\1\{\ell(c)=ke^{(b)}\}\sfrac{\omega(a,b)-\langle
\ell(\cdot,\cdot),\,\pi(\cdot,\cdot)\rangle(a,b)}{k}
\end{equation}

and  note  that  $$\displaystyle
\lim_{k\to\infty}\sum_{(a,c)\in\skriy\times\skriy_k^{*}}\tilde{\pi}_k(a,c)=1.$$
 For  large $k$ with
$\displaystyle
\|\tilde{\pi}_k\|_k:=\sum_{(a,c)\in\skriy\times\skriy_k^{*}}\tilde{\pi}_k(a,c)>0,$
define  another  probability  measure $\hat{\pi}_k$  by
\begin{equation}\label{Equ.approx3}
\hat{\pi}_k(a,c)=\sfrac{1}{\|\tilde{\pi}_k\|_k}\tilde{\pi}_k(a,c),\,\mbox{if
$(a,c)\in\skriy\times\skriy_k^{*}$ and  $\hat{\pi}_k(a,c)=0$
otherwise.}
\end{equation}

From $\hat{\pi}_k$  we define a finite  measure
$\hat{\omega}_k\in\skrip(\skriy\times\skriy)$ by
 \begin{equation}\label{Equ.approx4}
 \hat{\omega}_k(a,b)=\sum_{c\in\skriy^*}\ell(a,c)\hat{\pi}_k(b,c).\end{equation}

 We prove  that  $(\hat{\omega}_k, \hat{\pi}_k)$  is  a  consistent
 approximation of   $(\omega,\pi)$ and $\pi_k$  is  an  element  of
 $\skrip(\skriy\times\skriy_k^*).$\\

 We shall  henceforth assume  that  $(\pi_k)_1(a)>0$   for  all  $a\in\skriy$   otherwise  if  $(\pi_k)_1(a)=0$  for  some $a\in\skriy$ then  as  $\skrik_k$  is  weakly  irreducible,  critical  offspring  kernel  we  can  find  strictly  positive  probability  vector  $(\pi_k)_0$  such that  $$\pi_{k}^{*}(a,c)=\skrik_k\{c\,|\,a\}(\pi_k)_0(a)$$    If  we  fix  $0<\eps<1,$ we can   define  another   probability  measure  $\pi_{k}^{\eps}(a,c)=(1-\eps)\pi_k(a,c)+\eps\pi_{k}^{*}(a,c)$  such  that  $(\pi_{k}^{\eps})_1$  is  strictly  positive. Refer  to  \cite[Lemma~3.6 and Proof]{DMS03}  for  similar argument  for  shift-invariant  measures.

\begin{lemma}[{\bf Consistent Approximation} ]\label{Approx111}
Let $(\omega,  \pi)\in\skrip_s.$ 
Then,  we  have
\begin{itemize}
\item [(i)]  $(\hat{\omega}_k,  \hat{\pi}_ k)$    is    consistent 
\item [(ii)]  $(\hat{\omega}_k,  \hat{\pi}_ k)  \to (\omega,  \pi)$  as
$k\to\infty.$
\item [(iii)] $(\hat{\omega}_k)_2=(\hat{\pi}_ k)_1.$
\end{itemize}
\end{lemma}
\begin{proof}

 (i) (ii) Fix $\eps>0$, write $\delta=\sfrac{2\eps}{3}$  and  choose $k(\delta)\in\N$
 (large)
such that, for  all  $k>k(\delta),$ $\displaystyle
\|\tilde{\pi}_k\|_k>0,$

\begin{equation}\sfrac{\|\omega\|-\|\langle
\ell(\cdot,\cdot),\,\pi(\cdot,\cdot)\rangle\|}{k\|\tilde{\pi}_k\|_k}\le
\delta \,\,\mbox{and}\,\, \big|\sfrac{1}{\|\tilde{\pi}_k\|_k}
-1\big|\le \delta.
\end{equation}

 Using  the  triangle  inequality, we  have  that

 $$\begin{aligned} d(\hat{\pi}_{k},\pi)&=\frac12
\sum_{(a,c)\in\skriy\times\skriy^*}\big|\hat{\pi}_{k}(a,c)-\pi(a,c)\big|\\
&\le\frac12\big\|\sfrac{1}{\|\tilde{\pi}_k\|_k}
-1\big\|+\sfrac{\|\omega\|-\|\langle
\ell(\cdot,\cdot),\,\pi(\cdot,\cdot)\rangle\|}{2k\|\tilde{\pi}_k\|_k}
+\sfrac{1}{2\|\tilde{\pi}_k|_k}\sum_{b\in\skriy}\sum_{(a,c)\in\skriy\times\skriy^*}\1\{\ell(c)=ke^{(b)}\}\sfrac{\omega(a,b)-\langle
\ell(\cdot,\cdot),\,\pi(\cdot,\cdot)\rangle(a,b)}{k}\\
 &=\frac{1}{2}\big\|\sfrac{1}{\|\tilde{\pi}_k\|_k}
-1\big\|+\sfrac{\|\omega\|-\|\langle
\ell(\cdot,\cdot),\,\pi(\cdot,\cdot)\rangle\|}{2k\|\tilde{\pi}\|_k}+\sfrac{\|\omega\|-\|\langle
\ell(\cdot,\cdot),\,\pi(\cdot,\cdot)\rangle\|}{2k\|\tilde{\pi}\|_k}\\
&\le \sfrac{\delta}{2}+\delta\\
&=\eps. \end{aligned}
$$
  Moreover, for  all $a,b\in\skriy,$ we have

$$
\begin{aligned}
&\hat{\omega}_k(a,b)\\
&=\sum_{c\in\skriy^*}\ell(a,c)\hat{\pi}_k(b,c)\\
&=\sfrac{1}{{\|\tilde{\pi}_k\|_k}}\Big(1-\sfrac{\|\omega\|-\|\langle
\ell(\cdot,\cdot),\,\pi(\cdot,\cdot)\rangle\|}{k}\Big)
\sum_{c\in\skriy^*}\ell(a,c)\pi(b,c)+\sfrac{1}{{\|\tilde{\pi}_k\|_k}}\omega(a,b)-\sfrac{1}{{\|\tilde{\pi}_k\|_k}}\langle \ell(\cdot,\cdot),\,\pi(\cdot,\cdot)\rangle(a,b)\\
&=\sfrac{1}{{\|{\tilde{\pi}_k\|_k}}}\omega(a,b)-\sfrac{1}{{\|{\tilde{\pi}}_k\|_k}}\sfrac{\|\omega\|-\|\langle
\ell(\cdot,\cdot),\,\pi(\cdot,\cdot)\rangle\|}{k}\langle
\ell(\cdot,\cdot),\,\pi(\cdot,\cdot)\rangle(a,b)
\stackrel{k\uparrow\infty}{\longrightarrow} \omega(a,b).
\end{aligned}$$

This proves that   $(\hat{\omega}_k,\hat{\pi}_k)$   is  a consistent
 element  of  $\skrip(\skriy\times\skriy)\times\skrip(\skriy\times\skriy_k^*)$   converging to
 $(\omega,\pi),$ sub-consistent.

(iii) We use   Lemma~\ref{Approx1a}  to choose shift-invariant
$\hat{\pi}_{k,x,y}$  converging  to  $\hat{\pi}_k.$
 Using shift-invariance of  $\hat{\pi}_{k,x,y}$ we  have
 $$\begin{aligned}
 (\hat{\omega}_k)_2(b)=\sum_{a\in\skriy}\hat{\omega}_k(a,b)=\sum_{a\in\skriy}\sum_{c\in\skriy_k^*}\ell(b,c)\lim_{\heap{x\to
0}{y\downarrow0}}\hat{\pi}_{k,x,y}(a,c) & =\lim_{\heap{x\to
0}{y\downarrow0}}\sum_{a\in\skriy}\sum_{c\in\skriy_k^*}\ell(b,c)\hat{\pi}_{k,x,y}(a,c)\\
&=\lim_{\heap{x\to
0}{y\downarrow0}}(\hat{\pi}_{k,x,y})_1(b)=(\hat{\pi}_k)_1(b),
\end{aligned}$$
where  $(\hat{\pi}_{k,x,y})_1$  is  the  $\skriy-$ marginal  of
$\hat{\pi}_{k,x,y}.$
 This ends  the  proof  of  the  Lemma.
\end{proof}

 For $\skrik_k$ we
recall the definition of the rate function
${J}_k:\skrip(\skriy\times\skriy)\times\skrip(\skriy\times\skriy_k^*)\to[0,\infty]$
from Theorem~\ref{general2} as
$$J_k(\omega,\,\pi)=\left\{ \begin{array}{ll} H(\pi \,\|\,
\pi_1\otimes\skrik_k) &
\mbox{ if $(\omega,\,\pi)$ is consistent and $\omega_2=\pi_1,$}\\
\infty & \mbox{ otherwise.}
\end{array} \right.$$
Lemma~\ref{Approx2} below  is a key ingredient in  our proof  of the
lower bound in Theorem~\ref{general} and will be proved using the
above two approximation Lemmas.

\begin{lemma}[{\bf Rate Function Approximation }]\label{Approx2}
 Suppose $(\omega,  \pi)\in\skrip_s$  and
 $\pi \ll \pi_1 \otimes \skrik.$
 Then, for every $\eps>0,$ there exists $(\hat{\omega},
 \hat{\pi})\in\skrip_{c,k}$ such that
 $|\omega(a,b)-\hat{\omega}(a,b)|<\eps,$ for all $a,b\in\skriy$,
 $d( \pi,\hat{\pi}) \le  \eps,$ $\hat{\omega}_2=\hat{\pi}_1$ and $$\widetilde{J}_k(\hat{\omega},
 \hat{\pi})- \widetilde{J}(\omega, \pi)\le\eps.$$

\end{lemma}

\begin{proof}
Recall from \eqref{Equ.approx3}  and \eqref{Equ.approx4}  the
definitions of $\hat{\pi}_k,$ $\hat{\omega}_k.$  Note  from
Lemma~\ref{Approx111} (i) and (ii)  that
 $(\hat{\omega}_k,\hat{\pi}_k)$  is consistent pair  of  measures converging to
 $(\omega,\pi),$  that  satisfies  all assumptions  of
 Lemma~\ref{Approx2}. Furthermore , we  have
 $(\hat{\omega}_k)_2=(\hat{\pi}_k)_1$  by Lemma~\ref{Approx111}
 (iii).

Now, we take $\hat{\pi}_k,$ $\hat{\omega}_k$ in Lemma~\ref{Approx2}
as
 $\hat{\pi}=\hat{\pi}_k$ and $\hat{\omega}=\hat{\omega}_k.$  Then,
$\hat{\pi} \ll (\hat{\pi})_1 \otimes \skrik_k,$
 and by Lemma~\ref{Approx111},  we  have   for every $\eps>0,$ $|\omega(a,b)-\hat{\omega}(a,b)|<\eps,$ for all
 $a,b\in\skriy$, and
 $$d( \pi,\hat{\pi}) \le  \eps.$$   Now, using Lemma~\ref{Approx0} we obtain
$$\lim_{k\to\infty}\widetilde{J}_k(\omega_{k},\pi_{k})=\lim_{k\to\infty}\sum_{(a,c)\in\skriy\times\skriy^*}
\pi_{k}(a,c)\log\sfrac{\pi_{k}(a,c)} {(\pi_{k})_1\otimes\skrik_k(a,\,c)}
= \sum_{(a,c)\in\skriy\times\skriy^*}\pi(a,c) \log\sfrac{\pi(a,c)}
{\pi_1\otimes\skrik(a,\,c)}=\widetilde{J}(\omega,\pi).$$

\end{proof}

 We  recall that
$C(v)=(N(v),\,Y_1(v),\,\ldots,\,Y_{N(v)})$ and note that, for every
$k$ such that $\displaystyle\min_{a\in\skriy}\skrik\{\skriy_k^*|a\}>0$
and   any tree-indexed process $Y$, we have that

\begin{equation}\label{Equ-lower}
\begin{aligned}
\prob\big\{ Y=x \, \big| \,
 |T|=n \big\}&\ge\prob\big\{
(Y=x,\,C(v)\in\skriy_k^*,\,v\in V\,\big| \, |V(T)|=n\big\}\\
&=\prod_{v\in
V(T),\,|V(T)|=n}\skrik\{\skriy_k^*|x(v)\}\times\prob_k\big\{Y=x
\big| \, |V(T)|=n\big\}\\
&\ge\big(\min_{a\in\skriy}\skrik\{\skriy_k^*|a\}\big)^n\times\prob_k\big\{Y=x\,
\big| \, |V(T)|=n\big\},
\end{aligned}
\end{equation}
where $\prob_k$ denote the law of the tree-indexed process 
with initial distribution $\rho$ and offspring kernel $\skrik_k,$ and
$$\lim_{k\to\infty}\min_{a\in\skriy}\skrik\{\skriy_k^*|a\}
=\lim_{k\to\infty}\min_{a\in\skriy}\sum_{c\in\skriy_k}\skrik\{c|a\}
=\lim_{k\to\infty}\sum_{c\in\skriy^*}\1_{\{c\in\skriy_k\}}\min_{a\in\skriy}\skrik\{c|a\}=1,$$
since $\skriy$ is a finite Alphabet. To complete the proof of the
lower bound , we take $O\subset\skrip_s$. Then, for any
$(\omega,\pi)\in O$ sub-consistent with $\omega_2=\pi_1,$
$\pi\ll\pi_1 \otimes\skrik$ we may find $\eps>0$ with ball around
$(\omega,\pi)$  of radius $2\eps$ contained in $O.$ By our
approximation Lemma~\ref{Approx2}, we may find
$(\omega_{k},\pi_{k})\in O\bigcap\skrip_{c,k}$ with
$|\omega_{k}(a,b)-\omega(a,b)|\downarrow 0, $
$d(\pi_{k},\pi)\downarrow 0 ,$  $(\omega_{k})_2=(\pi_{k})_1,$
$\pi_k\ll(\pi_k)_1 \otimes\skrik_k$ and $$\tilde{J}_k(\omega_{k},\pi_{k})-
\tilde{J}(\omega,\pi)\le \eps.$$ Hence, using the lower bound of
Theorem~\ref{general2} for offspring kernel $\skrik_k$ given by
\eqref{equ-Q},  \eqref{Equ-lower} for large $k\geq k(\eps)$ (with
$\displaystyle\min_{a\in\skriy}\skrik\{\skriy_k^*|a\}>0$) and for large
$n\geq n(\eps),$ we obtain
$$
\begin{aligned}
\prob\big\{ (\tilde{\skril}_Y,\,\skrim_Y) \in O\,\big| \, |V(T)|=n \big\}
&\ge\prob\big\{\,|\omega_{k}(a,b)-\tilde{\skril}_Y(a,b)|<\eps,\,\mbox{
$\forall a,b\in\skriy$,}\, d(\pi_{k},\skrim_Y)<\eps \,\big| \, |V(T)|=n \big\}\\
&\ge
e^{n\alpha_k}\times\prob_k\big\{\,|\omega_{k}(a,b)-\tilde{\skril}_Y(a,b)|<\eps,\,\mbox{
$\forall a,b\in\skriy$,}\,\\
 &\quad\quad \quad\quad \quad \quad \quad \quad \quad\quad d(\pi_{k},\skrim_Y)<\eps \,\big| \, |V(T)|=n \big\}\\
&\ge \exp\big(-n(\widetilde{J}_k(\omega_{k},\pi_{k})+\eps-\alpha_k)\big)
\end{aligned}
$$

where $\alpha_k=\log(\min_{a\in\skriy}\skrik\{\skriy_k |a\}).$ Taking
limits we  have that
$$\displaystyle\liminf_{n\to\infty} \frac 1n \log
\prob\big\{ (\tilde{\skril}_Y,\,\skrim_Y) \in O\,\big| \, |V(T)|=n
\big\}\ge-\lim_{k\to \infty}\widetilde{J}_k(\omega_{k},\pi_{k})-\lim_{k\to
\infty}\alpha_k-\eps\ge-\widetilde{J}(\omega,\pi)-2\eps
$$ Taking
$\eps\downarrow 0$ we have the desired result which completes the
proof of the lower bound.

Applying  Lemma~\ref{sub-consistent} to  the  LDP for $(\tilde{L},\skrim)$ in the space $\skrip_s$  with  rate  function  function $\widetilde{J}$  we  obtain  the LDP  for  $(\tilde{L},\skrim)$  in the  whole  space $\tilde{\skrip}(\skriy\times\skriy)\times\skrip(\skriy\times\skriy^*)$  with  rate function $J.$

\section{Proof of
Corollaries~\ref{general-cor},\,\ref{geometric}~and~Theorem~\ref{main}}\label{cormain}

\subsection{Proof of Corollary~\ref{general-cor}.}
We derive this corollary from Theorem~\ref{general} by applying the
contraction principle to the linear mapping
$W:\tilde{\skrip}(\skriy\times\skriy)\times\skrip(\skriy\times\skriy^*)\mapsto\skrip(\skriy\times\skriy^{*})$
defined by $$W(\omega,\pi)(a,c)=\pi(a,c),\,\mbox{ for all
$(a,c)\in\skriy\times\skriy^*$}.$$ In fact  Theorem~\ref{general}
implies the large deviation principle for $W(\tilde{\skril}_Y,\skrim_Y)$ with
convex, good rate function
$\widehat{K}(\pi)=\inf\big\{J(\omega,\pi):\,W(\omega,\pi)=\pi\big\}.$
Now, using sub-consistency and $\omega_2=\pi_1$  we obtain the form
$\widehat{K}(\pi)=H(\pi \,\|\, \pi_1\otimes\skrik),$ for $\pi$  weak
shift-invariant. We write
$$\skrip_2=\Big\{\pi:\,\pi\in\skrip(\skriy\times\skriy^*),\,\pi \, \mbox{is  weak-  shift-invariant} \Big\}.$$
 Also, for all  (values of ) $n$ where $\prob\{|V(T)|=n\}>0,$ we
have
$$\prob\big\{\skrim_Y\in\skrip_2\,||V(T)|=n\big\}=1.$$

Moreover, if $\pi_n\in\skrip_2$ converges to $\pi$ then
 $$\pi_1(a)=\lim_{n\to\infty}(\pi_n)_1(a)\ge\liminf_{n\to\infty}
 \sum_{(b,c)\in\skriy\times\skriy^*}\ell(a,c)\pi_n(b,c)\ge\sum_{(b,c)\in\skriy\times\skriy^*}\ell(a,c)\pi(b,c),$$
which implies $\pi$ is weak-shift  invariant. This means $\skrip_2$
is a closed subset of $\skrip(\skriy\times\skriy^*)$. Therefore, by
\cite[Lemma~4.1.5]{DZ98}, the LDP for $\skrim_Y$ conditional on the event
$\{|V(T)|=n\}$ holds with convex, good rate function $K,$ which
completes the proof of the corollary.
\subsection{Proof of Theorem~\ref{main}.}\label{s-main} We begin the
proof of the theorem by stating the following Lemma,  which appears
in \cite[(3.40)]{DMS03}. The main  difference between
\cite[(3.40)]{DMS03} and  \eqref{Equ-inf}  lies  in  the  proof  of
the fact that  $\Lambda^*(z)=\phi_p(z)$,  were slight adaptation  is
made.  We  write  $|g|=\sum_{b\in\skriy}g(b).$
\begin{lemma} Suppose that  $q(n,a_1,\ldots,a_n)=p(n) \prod_{i=1}^n \widehat{q} (a_i)$, where $\widehat{q}(\cdot)$ is a probability
vector on $\skriy$ and $p(\,\cdot\,)$ a probability measure with
mean one on the nonnegative integers. Then, we have
\begin{equation}\begin{aligned}\label{Equ-inf}
\inf\Big\{ H(\widetilde{\pi} \,\|\,  q)\, : \, \widetilde{\pi} \in
\skrip(\skriy^*), \;\; g(b)&=\sum_{c\in\skriy^*} \ell(b,c) \,
\widetilde{\pi} (c) \mbox{ for all } b \in \skriy \Big\}\\
& = |g|
H\Big(g|g|^{-1}\,\|\,  \widehat{q}\Big) + \phi_p\big(|g|\big),
\end{aligned}
\end{equation}

where $g:\skriy\to\R $  and  $c=(n,a_1,\ldots,a_n).$
\end{lemma}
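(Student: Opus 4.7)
The plan is to reduce the infimum to a sequence of well-known variational problems by factorising $\widetilde\nu$ according to the length coordinate. Write $c = (n, a_1, \ldots, a_n)$ and decompose
\[
\widetilde\nu(c) = r(n)\,\rho(a_1,\ldots,a_n \mid n),
\]
where $r$ is the law of $n$ under $\widetilde\nu$ and $\rho(\cdot\mid n)$ is the conditional law of the type sequence. Since the reference measure $q$ has the product form $q(c) = p(n)\,\widehat q^{\otimes n}(a_1,\ldots,a_n)$, the chain rule for relative entropy gives
\[
H(\widetilde\nu \,\|\, q) \;=\; H(r\,\|\,p) \;+\; \sum_{n\ge 0} r(n)\, H\!\left(\rho(\cdot\mid n)\,\Big\|\,\widehat q^{\,\otimes n}\right).
\]
The constraint $\phi(b) = \sum_c m(b,c)\widetilde\nu(c)$ translates into $\sum_n r(n)\sum_i \rho_i(b\mid n) = \phi(b)$, where $\rho_i(\cdot \mid n)$ denotes the $i$-th marginal of $\rho(\cdot \mid n)$; summing over $b$ yields $\sum_n n\,r(n) = z$.

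First I would perform the inner minimisation, for fixed $r$ and fixed marginal profile $\mu_n := \tfrac1n \sum_{i=1}^n \rho_i(\cdot\mid n) \in \skrim(\skrix)$. The standard subadditivity estimate $H(\rho(\cdot\mid n)\,\|\,\widehat q^{\otimes n}) \ge \sum_i H(\rho_i(\cdot\mid n)\,\|\,\widehat q)$, followed by convexity of $H(\cdot\,\|\,\widehat q)$, gives
\[
H\!\left(\rho(\cdot\mid n)\,\Big\|\,\widehat q^{\,\otimes n}\right) \;\ge\; n\,H(\mu_n\,\|\,\widehat q),
\]
with equality when $\rho(\cdot\mid n) = \mu_n^{\otimes n}$. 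The outer problem thus becomes: minimise $H(r\,\|\,p) + \sum_n r(n)\,n\,H(\mu_n\,\|\,\widehat q)$ subject to $\sum_n r(n) n\,\mu_n = \phi$. Applying Jensen's inequality once more (for the convex map $\mu \mapsto H(\mu\,\|\,\widehat q)$) to the probability weights $r(n)n/z$,
\[
\sum_n r(n)\,n\,H(\mu_n\,\|\,\widehat q) \;\ge\; z\,H\!\left(\frac{\sum_n r(n)n\,\mu_n}{z}\,\Big\|\,\widehat q\right) \;=\; z\,H(\phi/z\,\|\,\widehat q),
\]
with equality when $\mu_n \equiv \phi/z$.

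At this stage only the constraint $\sum_n n\,r(n) = z$ remains, and the remaining problem is the classical Cramér/Gibbs variational problem
\[
\inf\bigl\{H(r\,\|\,p)\;:\;r\in\skrim(\N),\ \textstyle\sum_n n\,r(n)=z\bigr\} \;=\; I_p(z),
\]
attained at the exponential tilt $r^*(n) = p(n)\,e^{\lambda^* n}/\sum_j p(j)e^{\lambda^* j}$ with $\lambda^*$ chosen so that the mean equals $z$; this identity is immediate from Legendre duality and the definition \eqref{Ip-def} of $I_p$. Combining the three steps and checking that the minimiser $\widetilde\nu^*(c) = r^*(n)(\phi/z)^{\otimes n}(a_1,\ldots,a_n)$ satisfies the original marginal constraint gives the claimed value $z H(\phi/z\,\|\,\widehat q) + I_p(z)$. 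The only place that requires care is the degenerate case $z = 0$ (forcing $\phi \equiv 0$, $r = \delta_0$ and the convention $0\cdot H(\phi/0\,\|\,\widehat q)=0$) and the possibility $I_p(z) = \infty$, in which case the constraint is incompatible with any $r \ll p$ and the infimum on both sides is $\infty$; these are handled by inspection. The main technical obstacle is justifying the convexity/Jensen step when some $r(n) = 0$ or when several $H$'s are infinite, which is dealt with by the standard convention that these terms drop out of the sum.
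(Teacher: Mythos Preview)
Your proposal is correct and follows essentially the same route as the paper: factorise $\widetilde\nu$ by the length coordinate, apply the chain rule for relative entropy, use subadditivity plus Jensen/convexity to reduce the conditional term to $zH(\phi/z\,\|\,\widehat q)$, and finish with the Cram\'er--Gibbs identity $\inf\{H(r\,\|\,p):\sum_n n\,r(n)=z\}=I_p(z)$. The paper performs the two Jensen steps in one combined inequality and spells out the Cram\'er identity from first principles, whereas you invoke it as standard Legendre duality; otherwise the arguments coincide.
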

\begin{proof}

For $ \widetilde{\pi} \in \skrip(\skriy^*),$ we let
$g(b)=\sum_{c\in\skriy^*} \ell(b,c) \, \widetilde{\pi}(c), \mbox{
for all } b \in \skriy $ and  suppose first that $|g|=0$, i.e.
$g(b)=0$ for all $b \in \skriy$. Then,
$\widetilde{\pi}((0,\emptyset))=1$ is the only possible measure in
left side of  \eqref{Equ-inf},  which gives  us  $-\log q((0,\emptyset))= -\log p(0)$. It follows from \eqref{Ip-def}
that $\phi_p(0)=-\log p(0)$ giving  us \eqref{Equ-inf} for such
$g(\cdot)$. We assume hereafter that $z>0$. Now the possible
measures $\widetilde{\pi}(\cdot)$ in the left side of
\eqref{Equ-inf} are of the form $\widetilde{\pi}(c)=\alpha(n)
u_n(a_1,\ldots,a_n)$ for $c=(n,a_1,\ldots,a_n)$, with $u_0=1$, where
$u(\cdot)$ is a probability measure on the nonnegative integers
whose mean is $z$, and $u_n(\,\cdot\,)$, $n \geq 1$, are probability
measures on $\skriy^n$ with marginals $u_{n,i}(\,\cdot\,)$ such that
\begin{equation}\label{Ia-cons}
g(b)=\sum_{n=1}^\infty \alpha(n) \sum_{i=1}^n u_{n,i} (b) \quad \mbox{
for all } b \in\skriy \;.
\end{equation}
By the assumed structure of $q(\,\cdot\,)$ we have for such
$\widetilde{\pi}(\,\cdot\,)$ that
\begin{equation}
H(\widetilde{\pi} \,\|\,  q)  = \sum_{n=1}^\infty \alpha(n) H ( u_n \, \|
\, \widehat{q}^n ) + H( u \, \| \, p)\; ,
\end{equation}
where $\widehat{q}^n$ denotes the product measure on $\skriy^n$ with
equal marginals $\widehat{q}$. Write $|g|:=\sum_{b\in\skrix}g(b)$ and   recall   from \cite{DMS03}
Subsection~3.4 that
$$ |g|H\,\Big( |g|^{-1}
\sum_{n=1}^\infty \alpha(n) \sum_{i=1}^n u_{n,i}\,\Big\|\,\widehat{q}\Big)\leq
\sum_{n=1}^\infty \alpha(n) \sum_{i=1}^n H\big(u_{n,i}\,\big\|\,
\widehat{q}\big)
 \leq \sum_{n=1}^\infty \alpha(n) H \big( u_n \, \big\| \, \widehat{q}^n \big)\,,$$ with equality whenever $u_n = \prod_{i=1}^n
u_{n,i}$ and $u_{n,i}$ are independent of $n$ and $i$.  So, in view
of \eqref{Ia-cons},
\begin{equation}\label{Ia-temp}
 |g| H(g|g|^{-1} \,\|\,  \widehat{q}) +
H( s \, \| \, p) \le H(\widetilde{\pi} \,\|\,  q) \,,
\end{equation}
with equality when  $u_n = (|g|^{-1} g)^n$ for all $n \geq 1$.

Now,write $\displaystyle\Lambda_p(\lambda):= \log\sum_{n}e^{\lambda
n}p(n)$ and notice that $\Lambda$ convex  function and
$\Lambda(0)=0<\infty,$ and so, we have, for every $\lambda\in\R,$\,
$\Lambda_p(\lambda)>-\infty.$ Using Jensen's inequality, for every
$s\in\skrip(\N\cup\{0\})$  and every $\lambda\in\R,$ we have
$$
\Lambda_p(\lambda)= \log\sum_{n}\alpha(n)\big(\sfrac{e^{\lambda
n}p(n)}{\alpha(n)}\big)\ge\sum_{n}\alpha(n)\log\big(\sfrac{e^{\lambda
n}p(n)}{\alpha(n)}\big)=\lambda\sum_{n}n\alpha(n)-H( s \, \| \, p),
$$
with equality if $s_{\lambda}(n)=p(n)e^{\lambda
n-\Lambda(\lambda)}.$ Thus, for all $\lambda$ and all $z,$ we have
\begin{equation}\label{Equ-Lamb}
\lambda z-\Lambda(\lambda)\le  \inf \big\{ H( s \, \| \, p) :
s\in\skrip(\N\cup\{0\})\mbox{ and $\sum_{n}\alpha(n) n = z $}
\big\}:=\Lambda^*(z),
\end{equation}
 with equality when $\sum_{n} \alpha(n) n = z .$ Elementary calculus also shows that
\begin{equation}\label{Equ-Ip}
\Lambda^*(z)=\lambda_* z-\Lambda_p(\lambda_*),
\end{equation}
where $\lambda_*$ is the solution of $\Lambda_p^{'}(\lambda_*)=z$
and $\sfrac{d\Lambda_p}{d\lambda}:=\Lambda_p^{'}(\lambda).$
Combining \eqref{Equ-Ip} and \eqref{Equ-Lamb} we obtain
$$\sup_{\lambda\in\R}\big\{\lambda z-\Lambda_p(\lambda)\big\}\le\Lambda^*(z)
\le\sup_{\lambda\in\R,\,\Lambda_p^{'}(\lambda)=z}\big\{\lambda
z-\Lambda(\lambda)\big\}\le\sup_{\lambda\in\R}\big\{\lambda
z-\Lambda_p(\lambda)\big\}.$$ This yields $\Lambda^*(z)=\phi_p(z),$
which ends the proof of the Lemma.
\end{proof}

Next, note that $Y$ is an irreducible, critical multitype
Galton-Watson tree with offspring law
\begin{equation}
\skrik\{c\,|\,b\}=p(n)\prod_{i=1}^{n} K\{a_i\,|\,b\}, \mbox{  for
$c=(n,a_1,\ldots,a_n)$.}
\end{equation}

We derive Theorem~\ref{main} from
Theorems~\ref{general}~and~\ref{general2} by applying the
contraction principle to the continuous linear mapping
$F:\skrip(\skriy \times \skriy)\times\skrip(\skriy \times \skriy^*)
\to \R^{\skriy \times \skriy}$, defined by
\begin{equation}
F(\omega,\pi)(a,b)=\omega(a,b),\, \mbox{ for all } (\omega,\pi) \in
\skrip(\skriy \times \skriy)\times\skrip(\skriy \times \skriy^*)
\mbox{ and } a,b\in\skriy.
\end{equation}
It is easy to see that on $\{ |V(T)| = n \}$ we have
$\skril_Y=\frac{n}{n-1} F(\tilde{\skril}_Y,\skrim_Y)=\frac{n}{n-1}\tilde{\skril}_Y$. It
follows that conditioned on $\{ |V(T)| = n \}$ the random variables
$\skril_Y$ are exponentially equivalent to $\tilde{\skril}_Y$, hence $L_X$
satisfy the same large deviation principle as $F(\tilde{\skril}_Y,\skrim_Y),$
see \cite[Theorem 4.2.13]{DZ98}.Without loss of generality we
restrict the space for the large deviation principle of $\skril_Y$ to the
set of all probability vectors on $\skriy \times \skriy$, see
\cite[Lemma 4.1.5(b)]{DZ98}.

Suppose $p$ has finite second moment. Then, Theorem \ref{general}
implies the large deviation principle for $F(\tilde{\skril}_Y,\skrim_Y)$
conditioned on $\{ |V(T)|=n \}$ with the good rate function
$\phi(\rho)=\inf\{J(\rho,\,\pi) : F(\rho,\pi)= \rho\}$, see for example
\cite[Theorem 4.2.1]{DZ98}. Convexity of $I$ follows easily from the
linearity of $F$ and convexity of $J$.

Turning to the proof of \eqref{Idef}, recall that $\pi$ is
sub-consistent if and only if $
F(\omega,\pi)(a,b)\ge\sum_{c\in\skriy^*}\ell(b,c)\pi(a,c)$ for all $a,b
\in \skriy$. Hence, we have that
\begin{equation}
\phi(\rho,\pi) = \inf\big\{ H(\pi \, \| \pi_1\otimes\skrik)\, : \,
F(\rho,\pi)(\cdot,\cdot)\ge\sum_{c\in\skriy^*}\ell(\cdot,\,c)\pi(\cdot,c),
\pi_1=\rho_2 \big\} \;.
\end{equation}
Note that $\pi_1(a)=0$ yields $\sum_b F(\pi)(a,b)=0$ \,if\,
$\displaystyle
F(\rho,\pi)(\cdot,\cdot)=\sum_{c\in\skriy^*}\ell(\cdot,\,c)\pi(\cdot,c)$
and
$$\sum_{(b,c)\in\skriy^*}\ell(b,c)\pi(a,c)<0\,\,\mbox{if \,$ \displaystyle
F(\rho,\pi)(\cdot,\cdot)>\sum_{c\in\skriy^*}\ell(\cdot,c)\pi(\cdot,c).$}$$
Hence if $\rho_1(a)>0=\rho_2(a)$ for some $a \in \skriy$  then
$$\displaystyle\Big\{\pi :
F(\rho,\pi)(\cdot,\cdot)=\sum_{c\in\skriy^*}\ell(\cdot,c)\pi(\cdot,c),
\pi_1 = \rho_2\Big\}\cup\Big\{\pi :
F(\omega,\pi)(\cdot,\cdot)>\sum_{c\in\skriy^*}\ell(\cdot,c)\pi(\cdot,c),
\pi_1 = \rho_2\Big\}$$ is an empty set, and therefore
$\phi(\rho)=\infty$. Assuming, throughout the rest of the proof that
$\rho_1 \ll \rho_2$, it is not uneasy to verify that
\begin{equation}\label{I-abs}
\phi(\rho) = \sum_{a\in\skriy} \rho_2(a) \, \widehat{\phi}
\Big(\frac{\rho(a,\cdot)}{\rho_2(a)},\skrik\{\,\cdot\,|\,a\}\Big) \;,
\end{equation}
where for $q \in \skrip(\skriy^*)$,
$\displaystyle\widehat{\phi}\Big(\frac{\rho(a,\cdot)}{\rho_2(a)},q\Big)=
\inf\Big\{\widetilde{\phi}(g,q):\,g:\skriy \to \reals_+,\,
g(a)\le\sfrac{\rho(a,\cdot)}{\rho_2(a)},\,\mbox{ for all
$a\in\skriy$}\Big\}$  and 
\begin{equation}\label{Ia-dec}
\widetilde{\phi}(g,q) := \inf\Big\{ H(\widetilde{\pi} \,\|\,  q)\, :
\, \widetilde{\pi} \in \skrip(\skriy^*), \;\;
g(b)=\sum_{c\in\skriy^*} \ell(b,c) \, \widetilde{\pi} (c) \mbox{ for
all } b \in \skriy \Big\} \;.
\end{equation}

Suppose now that $q(c)=p(n) \prod_{i=1}^n \widehat{q} (a_i)$ for all
$c=(n,a_1,\ldots,a_n)$, where $\widehat{q}(\cdot)$ is a probability
vector on $\skriy$ and $p(\,\cdot\,)$ a probability measure with
mean one on the nonnegative integers, whose  second moment is
finite. Then, by Lemma~\ref{Equ-inf}, we have the representation
\begin{equation}\label{Ia-iden}
\widetilde{\phi}(g,q) = |g| H\big(g|g|^{-1} \,\|\,
\widehat{q}\big) + \phi_p(g) \;,
\end{equation}
where $|g|:=\sum_{b\in\skriy}g(b).$  Therefore, it suffice
for us to show that
\begin{equation}\label{Ia-iden1}
\inf\Big\{\widetilde{\phi}(g,\widehat{q}):\,\, g:\skriy \to
\reals_+,\, g(b)\le \sfrac{\rho(a,b)}{\rho_2(a)},\,\mbox{for all
$a\in\skriy$}\Big\}=\sfrac{\rho_1(a)}{\rho_2(a)}
H\big(\sfrac{\rho(a,\cdot)}{\rho_1(a)}\,\|\, \widehat{q}\big) +
\phi_p(\sfrac{\rho_1(a)}{\rho_2(a)}).
\end{equation}

To do this, we write $$h(g(b)):=\widetilde{\phi}(g,\widehat{q})+
\alpha(b)\big(g(b)-\sfrac{\rho(a,b)}{\rho_2(a)}\big),\,\mbox{ for
$b\in\skriy,$}$$  where $\alpha$ is a Lagrange multiplier. Then,
elementary calculus shows that $\alpha(b)$ is the solution of the
equation
$$\phi_p(\sfrac{\rho_1(a)}{\rho_2(a)})-\sfrac{\rho(a,b)}{\rho_2(a)}\sum_{a\in\skriy}e^{-\alpha(a)}\widehat{q}(a)=0$$
and that $g(b)=\sfrac{\rho(a,b)}{\rho_2(a)}$ is the minimizer of
our constraint optimization problem.  Writing
$g(b)=\sfrac{\rho(a,b)}{\rho_2(a)}$ in \ref{Ia-iden} we obtain left
side of \eqref{Ia-iden1} which proves the theorem in case of $p$
with unbounded support and finite second moments.

\subsection{Proof of Corollary~\ref{geometric}.}
Recall that $T$ is Galton-Watson tree with offspring law
$p(n)=2^{-(n+1)},$ $n=0,\ldots,.$ Also, we recall that $Y$
is markov chain indexed by $T$ with arbitrary initial distribution
and transition kernel $K.$ Then, $Y$ satisfies all assumptions of
Theorem~\ref{main}, in particular we have
$\sum_{n=0}^{\infty}n^2p(n)=3<\infty.$ Therefore, by
Theorem~\ref{main}, $\skril_Y$ conditioned on the events $\{|V(T)|=n\}$
satisfies a large deviation principle in
$\skrip(\skriy\times\skriy)$ with good, convex rate function

\begin{align}\label{Equ-geo}
\phi(\rho)=\left\{ \begin{array}{ll} H(\rho \, \| \,\rho_1\otimes K) +
\displaystyle\sum_{a \in \skriy} \rho_2(a) \, \phi_p
\Big(\frac{\rho_1(a)}{\rho_2(a)}\Big)
 & \mbox{ if $\rho_1\ll \rho_2$,} \\
\infty & \mbox{ otherwise,}
\end{array} \right.
\end{align}
where $g_p(x)=\sup_{\lambda\in\R}\big\{\lambda
x+\log(2-e^{\lambda})\big\}.$ Elementary Calculus shows that
\begin{equation}\label{Equ-main}
\sup_{\lambda\in\R}\big\{\lambda x+\log(2-e^{\lambda})\big\}=x\log
x-(x+1)\log\sfrac{(x+1)}{2}.
\end{equation}
Therefore, writing \eqref{Equ-main}  in \eqref{Equ-geo} and
rearranging terms we obtain the form of the rate function in the
corollary which completes the proof.

\bigskip


\bigskip

\end{document}